\newtheorem{theorem}{Theorem}[section]
\newtheorem{lemma}{Lemma}[section]
\newcommand{\IR}{\mathbb{R}}
\newcommand{\IO}{\mathcal{O}}
\begin{document}

\title{Normal form for singular Bautin bifurcation in a slow-fast system with Holling type III functional response}

\author[$^1$]{Tapan Saha}
\author[$^2$]{Pranali Roy Chowdhury}
\author[$^3$]{Pallav Jyoti Pal}
\author[$^2$]{Malay Banerjee}

\affil[$^1$]{Department of Mathematics, Presidency University, Kolkata, India}
\affil[$^2$]{Department of Mathematics and Statistics, IIT Kanpur, Kanpur, India }
\affil[$^3$]{Department of Mathematics, Krishna Chandra College, Hetampur, India}

\maketitle

\begin{abstract}
Over the last few decades, complex oscillations of slow-fast systems have been a key area of research. In the theory of slow-fast systems, the location of singular Hopf bifurcation and maximal canard is determined by computing the first Lyapunov coefficient. In particular, the analysis of canards is based on the genericity condition that the first Lyapunov coefficient must be non-zero. This manuscript aims to further extend the results to the case where the first Lyapunov coefficient vanishes. For that, the analytic expression of the second Lyapunov coefficient and the investigation of the normal form for codimension-2 singular Bautin bifurcation in a predator-prey system is done by explicitly identifying the locally invertible parameter-dependent transformations. 
A planar slow-fast predator-prey model with Holling type III functional response is considered here, where the prey population growth is affected by the weak Allee effect and the prey reproduces much faster than the predator. Using geometric singular perturbation theory, normal form theory of slow-fast systems, and blow-up technique, we provide a detailed mathematical investigation of the system to show a variety of rich and complex nonlinear dynamics including but not limited to the existence of canards, relaxation oscillations, canard phenomena, singular Hopf bifurcation, and singular Bautin bifurcation. 
Additionally, numerical simulations are conducted to support the theoretical findings. 
\end{abstract}



\section{Introduction}

Slow-fast systems or singularly perturbed systems of ordinary differential equations evolving on two or more timescales are represented in the form 
\begin{eqnarray}\label{general_slow_fast}
	\dot{x}\, =\, f\left(x, y, \epsilon\right),\,\,
\dot{y} \,=\, \epsilon g\left(x, y, \epsilon\right),
 \end{eqnarray}
where $x \in \mathbb{R}^m$ and $y \in \mathbb{R}^n$ ($m,n\geq 1$) are the fast and the slow variables respectively, $f$ and $g$ are sufficiently smooth functions,  $0 < \epsilon \ll 1$ is the singular perturbation parameter, and the over dot $(~ \dot{}~ )$ stands for derivative w.r.t. $t \in \mathbb{R}$. They are known to exhibit a wide range of complicated oscillations of many physical and applied systems including, but not limited to, canard cycles, relaxation oscillations, and mixed mode oscillations \cite{kuehn2015multiple}. The investigation of these systems has received considerable attention in recent years \cite{fitzhugh1955mathematical,han2012slow,hsu2021relaxation,kristiansen2019geometric,krupa2008mixed,krupa2001relaxation,li2018bifurcation,qinsheng2020relaxation,yaru2020canard}. 

In this article, we concentrate on slow-fast predator-prey systems of the type \eqref{general_slow_fast} with $m=n=1$. Some examples of recent research in this field are as follows:
       The author in \cite{hek2010geometric} discussed and analysed the geometric singular perturbation theory, three major Fenichel theorems, and their relevance to biological modelling applications. 
        In \cite{kooi2018modelling}, the traditional Rosenzweig-MacArthur (RM) model and the Mass Balance (MB) chemostat model are investigated in the slow-fast framework. According to their analysis, the RM model captures the dynamically rich canard explosion phenomenon, but the MB model does not. Consequently, in the slow-fast setting, the predictions of the MB Model are quite different from that of the RM model. 
		The canard phenomenon and number of canard cycles for a predator-prey system with Holling type III and IV functional response was investigated by Li and Zhu \cite{li2013canard}.
		 In \cite{atabaigi2021canard} the author studies the dynamics of a multiple timescale predator-prey model where the predator is a generalist who feeds on both the focal prey and the functional response is Holling type III. With the help of the normal form theory of slow-fast systems, the geometric singular perturbation theory, and blow-up technique, the author \cite{krupa2001extending} has investigated the existence of homoclinic orbits, heteroclinic orbits, canard limit cycles, and relaxation oscillations bifurcating from the singular homoclinic cycles.

In order to systematically analyze a slow-fast system, it is important to recognize the different subprocesses occurring at the various time scales, comprehend them, and then attempt to characterize the complete dynamics of the full system based on the dynamical behaviour of the subsystems. The geometric analysis pioneered by Fenichel \cite{fenichel1979geometric} is based on this approach. Normal hyperbolicity is a crucial property that critical manifolds may have.
The mathematical theory, known as geometric singular perturbation theory (GSPT) is used to analyze the systems of the form \eqref{general_slow_fast} \cite{dumortier1996canard,fenichel1979geometric,krupa2001relaxation,kuehn2015multiple}. To analyse slow-fast systems with hyperbolic points, one may make use of the geometrical tools and methods outlined by Fenichel theory  \cite{fenichel1979geometric}. However, other geometrical approaches, such as the blow-up method, introduced by Dumortier and Roussarie \cite{dumortier1996canard} and developed by Krupa and Szmolyan \cite{krupa2001extending,krupa2001relaxation}, and slow-fast normal form theory \cite{arnold1994dynamical}, are commonly used to explore the dynamics of a slow-fast system with non-hyperbolic singularities. The blow-up technique includes de-singularization of non-hyperbolic singularities. The key idea of such blow-up transform is to reduce a slow-fast system  into the form 
      \begin{align}
       x' = f(x, y, \mu),~~ y' = \epsilon g(x, y, \mu), ~~\epsilon' = 0, 
    \end{align}
   
such that a non-hyperbolic equilibrium, say, $(x, y,\epsilon) = (0, 0, 0)$ (where $D_xf(0,0,0)$ has all its eigenvalues equal to zero), is blown-up, for example, to a sphere, to desingularize the equilibrium $(0, 0, 0)$. Generally, different (overlapping) directional charts are used to investigate the dynamics for the blown-up system and finally, a
qualitative description of the original system is obtained by blow-down. Using the above-mentioned technique, one can identify the existence of canards, relaxation oscillations, canard phenomena, singular Hopf bifurcation, singular Bautin bifurcation.

In this article, we consider a slow-fast predator-prey system with a weak Allee effect in the prey ($x$) population growth and sigmoid functional response of the form $f(x)=\frac{qx^2}{x^2+c}$, called Holling type-III, where $c>0$ is the half-saturation constant for the predator $y$ that gauges how abruptly $f(x)$ changes \cite{getz1996hypothesis, huang2014bifurcations}. Allee effect comprises both weak and strong Allee effects \cite{bai2021dynamics,van2007heteroclinic}. We note that the weak Allee effect growth function, in contrast to the strong Allee effect, is always positive and has no threshold value  \cite{courchamp2008allee,pal2012delayed,wang2011predator}. In this article, the weak Allee effect is incorporated into the prey population growth, allowing the prey to grow even at low population densities. Based upon the above considerations, the model can be described as follows: 
\begin{subequations}\label{model_dim}
\begin{eqnarray}
\frac{dx}{dT}&=&rx(1-\frac{x}{K})(x+m)-\frac{qx^2y}{x^2+c},\\
\frac{dy}{dT}&=&\frac{px^2y}{x^2+c}-dy,
\end{eqnarray}
\end{subequations}
with $(c,d, K, m,p,q, r) \in \mathbb{R}^7_+$, where $x = x(t)$ and $y = y(t)$ denote the prey and predator population densities at time $t > 0$, 
$c>0$ is the half-saturation constant for the predator,
$d$ is the predator's natural death rate,
$K$ is the environmental carrying capacity, 
$r$ is the intrinsic per capita growth
rate of prey, $q$ denotes the maximum per capita consumption rate, and $p$ denotes the conversion efficiency of consumed prey into new predators \cite{getz1996hypothesis,hsu2001global,hsu2008global}.
Considering that the prey reproduces substantially much faster than the predator and using the following scaling of the variables given by,
\begin{equation*}
     x=Ku, ~~ y=\frac{rK^2}{q}v, ~~T=\frac{t}{rK},
\end{equation*}
the model system \eqref{model_dim} can then
be rewritten in the following slow-fast dimensionless form: 
\begin{subequations}\label{temp_model}
\begin{eqnarray}
\frac{du}{dt}&=&\left((1-u)(u+\theta)-\frac{uv}{u^2+\eta}\right)u,\\
\frac{dv}{dt}&=&\epsilon\left(\frac{u^2}{u^2+\eta}-\delta\right)v,
\end{eqnarray}
\end{subequations}
where $\mu=(\delta,\theta,\eta)\in \mathbb{R}_+\times (0,1)^2$ and $0<\epsilon\ll 1$ and subjected to the initial conditions $u(0)>0$ and $v(0)>0$ such that $\delta=\frac{d}{p}$, $\theta=\frac{m}{K}$, $\eta=\frac{c}{K^2}$ and $\epsilon=\frac{d}{p}$ are the dimensionless parameters.

The main aim of this article is to analyze the phase-space of predator–prey systems \eqref{temp_model} in the context of a slow-fast framework and offer a detailed description of the vast range of rich and complex local and global dynamical behaviour, including, but not just restricted to, singular Hopf bifurcations, singular Bautin bifurcations, canard cycles, canard phenomenon and occurrence of relaxation oscillations via canard cycles known as ``boom and bust cycle" \cite{rinaldi1992slow,sadhu2022analysis}. It suggests that both populations may coexist with a predictable pattern of population explosions and contractions. The main motivation of this work is the derivation of the analytic form of the second Lyapunov coefficient and a thorough study of the singular Bautin bifurcation for a system under slow-fast framework \cite{kuznetsov1998elements, lu2021global}. The normal form for singular Bautin bifurcation has been derived here by explicitly finding the locally invertible parameter-dependent transformations. So far as our knowledge goes, the explicit derivation of locally invertible parameter-dependent transformations is not available in the literature. 

The remaining portion of the paper is organized as follows: Section \ref{sec2} reports the invariance, boundedness of the solution,  existence, and local stability analysis of equilibrium points of the system \eqref{temp_model}. The slow-fast system is analyzed by separating system \eqref{temp_model} into the fast and slow limiting subsystems in Section \ref{sec3}. The singular bifurcations (singular Hopf and singular Bautin) results are described in \ref{sec4}. We investigate the canard explosion and relaxation oscillation in Section \ref{sec5}. In Section \ref{sec6}, numerical simulations are performed to verify the major theoretical predictions. In the last Section \ref{sec7}, we discuss our findings and provide our conclusions. 

\section{Positivity and boundedness of the solution}\label{sec2}

We state the following lemmas to ensure the positivity and boundedness of the solution for the system \eqref{temp_model}. \textcolor{black}{The proofs of the lemmas are omitted as it directly follows from the approach discussed in \cite{BirkhoffRota}. } 
\begin{lemma}\label{lemma_1}
The first quadrant $\mathbb{R}_+=\{(u,v)\in\IR^2|u\geq 0, v\geq 0\}$ is invariant under the flow generated by the vector field 
$\mathcal{F}= \hat{f}\frac{\partial}{\partial u}+\epsilon \hat{g}\frac{\partial}{\partial v}$ where $\hat{f} = u(1-u)(u+\theta)-\frac{u^2v}{u^2+\eta}$ and $\hat{g} = \frac{u^2v}{u^2+\eta}-\delta v.$
\end{lemma}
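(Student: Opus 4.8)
The plan is to establish invariance through the integrating-factor representation of the two scalar equations, which is the classical technique for positivity of solutions of Kolmogorov-type population models. The key structural observation is that each component of the vector field carries a factor of the corresponding variable: $\hat{f} = u\,\bigl[(1-u)(u+\theta)-\tfrac{uv}{u^2+\eta}\bigr]$ and $\hat{g} = v\,\bigl[\tfrac{u^2}{u^2+\eta}-\delta\bigr]$, so that $u \mid \hat{f}$ and $v \mid \hat{g}$. Since $\eta>0$, the denominator $u^2+\eta$ never vanishes and both right-hand sides are smooth on all of $\IR^2$, so for any initial datum a unique solution exists on a maximal interval $[0,T_{\max})$.

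First I would fix $(u(0),v(0))$ in the open first quadrant and, along the (a priori local) solution, introduce the continuous coefficients $a(s)=(1-u(s))(u(s)+\theta)-\tfrac{u(s)v(s)}{u(s)^2+\eta}$ and $b(s)=\epsilon\bigl[\tfrac{u(s)^2}{u(s)^2+\eta}-\delta\bigr]$. The system then reads $\dot u = a(s)\,u$ and $\dot v = b(s)\,v$, two linear scalar equations whose solutions are given explicitly by
\[
u(t)=u(0)\,\exp\!\Big(\int_0^t a(s)\,ds\Big),\qquad v(t)=v(0)\,\exp\!\Big(\int_0^t b(s)\,ds\Big).
\]
Because the exponential factors are strictly positive, $u(t)$ and $v(t)$ keep the sign of their initial values; in particular a trajectory launched in the open quadrant stays there for all $t\in[0,T_{\max})$.

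To obtain invariance of the \emph{closed} quadrant I would then treat the boundary: setting $u(0)=0$ in the formula gives $u(t)\equiv 0$, so the $v$-axis is invariant, and likewise $v(0)=0$ forces $v(t)\equiv 0$, so the $u$-axis is invariant. By uniqueness of solutions these invariant axes act as barriers that no trajectory can cross, which together with the positivity argument shows that every orbit starting in $\IR_+$ remains in $\IR_+$.

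I expect the only delicate point to be the justification that the integrals $\int_0^t a\,ds$ and $\int_0^t b\,ds$ are finite. This, however, is immediate on any compact subinterval of $[0,T_{\max})$ since $a$ and $b$ are continuous there; equivalently, one may bypass the explicit integral altogether and invoke the tangency (Nagumo-type) criterion, observing that $\hat{f}|_{u=0}=0$ and $\hat{g}|_{v=0}=0$ mean the vector field $\mathcal{F}$ is tangent to each coordinate axis, so the boundary of $\IR_+$ is non-attracting from inside and cannot be crossed. Either route reproduces the Birkhoff--Rota argument cited in the text.
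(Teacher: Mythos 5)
Your proof is correct and follows exactly the approach the paper points to: the paper omits the proof, citing the Birkhoff--Rota technique, which is precisely your integrating-factor representation $u(t)=u(0)\exp\bigl(\int_0^t a\,ds\bigr)$, $v(t)=v(0)\exp\bigl(\int_0^t b\,ds\bigr)$ exploiting the Kolmogorov structure $u\mid\hat{f}$, $v\mid\hat{g}$. Your treatment of the boundary axes via uniqueness (or equivalently the tangency observation) correctly completes the invariance of the closed quadrant, so nothing is missing.
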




\begin{lemma}\label{lemma_2}
All the solutions of the model system \eqref{temp_model} initiated from an interior point of $\IR^2_+$ are bounded.
\end{lemma}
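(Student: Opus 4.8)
The plan is to establish boundedness in two stages: first control the fast (prey) variable $u$ on its own, then bootstrap to the slow (predator) variable $v$ through a suitably weighted auxiliary function. By Lemma~\ref{lemma_1} any trajectory starting in the interior of $\IR^2_+$ remains in $\IR^2_+$, so throughout the argument $u(t),v(t)\ge 0$, which is exactly what makes the comparison estimates below legitimate.

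First I would bound $u$. Since the predation term $\frac{u^2 v}{u^2+\eta}$ is non-negative on $\IR^2_+$, the prey equation of \eqref{temp_model} satisfies the scalar differential inequality $\dot u \le u(1-u)(u+\theta)$. The comparison equation $\dot w = w(1-w)(w+\theta)$ is a logistic-type ODE whose positive solutions obey $w(t)\le \max\{w(0),1\}$, because the equilibrium $w=1$ attracts all positive initial data (for $0<w<1$ the right-hand side is positive, for $w>1$ it is negative). The standard comparison theorem then yields $u(t)\le \max\{u(0),1\}$ for all $t\ge 0$, so $u$ is bounded and in fact $\limsup_{t\to\infty}u(t)\le 1$.

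The crux of the argument — and the only non-routine step — is to choose the weighting that eliminates the functional-response coupling between the two equations. I would set $W := u + \tfrac{1}{\epsilon}v$. Differentiating along the flow of \eqref{temp_model} and using the explicit expressions for $\dot u$ and $\dot v$, the two predation terms $\mp\frac{u^2 v}{u^2+\eta}$ cancel exactly, leaving
\[
\dot W = u(1-u)(u+\theta) - \delta v .
\]
Substituting $v=\epsilon(W-u)$ converts this into a closed scalar inequality for $W$,
\[
\dot W + \delta\epsilon\, W = u(1-u)(u+\theta) + \delta\epsilon\, u =: h(u),
\]
in which the right-hand side depends on the already-controlled variable $u$ alone.

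Since $u$ takes values in the compact interval $[0,\max\{u(0),1\}]$, the continuous function $h(u)$ is bounded above by some constant $C>0$ depending only on the initial data and the parameters $\mu$. Because $\delta,\epsilon>0$, the inequality $\dot W \le C - \delta\epsilon\,W$ integrates by a Gronwall/differential-inequality estimate to give $\limsup_{t\to\infty}W(t)\le C/(\delta\epsilon)$, so $W$ is bounded. Finally, $v=\epsilon(W-u)\le \epsilon W$ (using $u\ge 0$), whence $v$ is bounded as well, completing the proof. I expect the genuine obstacle to be recognizing the factor $1/\epsilon$ that decouples the predation terms; once that weighting is identified, the remaining estimates are routine comparison and Gronwall arguments.
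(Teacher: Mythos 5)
Your proof is correct, and it is essentially the argument the paper has in mind: the authors omit the proof, citing the standard comparison-theorem approach of Birkhoff--Rota, which is exactly what you carry out (logistic comparison for $u$, then the weighted function $u+\tfrac{1}{\epsilon}v$ cancelling the trophic-transfer terms, followed by a Gronwall estimate). Your write-up is a valid filling-in of the omitted details.
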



\subsection{Equilibrium points and their local stability}
The equilibria of the system \eqref{temp_model} are given by the interaction of the prey and predator nullcline as follows,
$$(1-u)(u+\theta)(u^2+\eta)\,=\,uv,\,\, u=0,$$ 
and
$$u^2/(u^2+\eta)=\delta,\,\,v=0.$$
The system has $E_0=(0,0)$ and $E_1=(1,0)$ as the boundary equilibria. The interior equilibrium points are obtained when the non-trivial prey and predator nullclines intersect in the interior of $\IR^2_+.$ This gives us at most one feasible interior equilibrium point $E_*=(u_*,v_*)$ where
\begin{align*}
 u_* = \sqrt{\delta\eta/(1-\delta)},\,\,\,\,\,\,v_*=(1-u_*)(u_*+\theta)(u_*^2+\eta)/u_*.  
\end{align*}
The equilibrium $E_*$ is feasible if $\delta<\frac{1}{1+\eta}$. Linearizing \eqref{temp_model} in the neighbourhood of $E_0$ we obtain that the eigenvalues of the Jacobian matrix are $0$ and $-\epsilon\delta.$
 Because of the zero eigenvalue, we cannot directly conclude about the dynamics of the system \eqref{temp_model} near the origin. The eigenvector corresponding to $0$ is the $u$ axis and the eigenvector corresponding to $-\epsilon\delta$ is the $v$ axis. Therefore, any trajectory starting very close to the $v$ axis gets attracted toward the origin and repels away from the vicinity of the origin. The origin is thus an attracting saddle node. We further linearize \eqref{temp_model} in the neighbourhood of $E_1$ and the eigenvalues of the Jacobian matrix are $-1-\theta<0$ and $\epsilon\Big(\frac{1}{1+\eta}-\delta\Big).$ 
For $\frac{1}{1+\eta}<\delta$ the equilibrium $E_1$ is a stable node, for $\frac{1}{1+\eta}>\delta$ the equilibrium is a saddle point, and for $\delta=\frac{1}{1+\eta}$ the equilibrium $E_1$ is an attracting saddle node. The eigenvector corresponding to the eigenvalue $-1-\theta$ is the $u$ axis. Thus, combining we get, the trajectory starting near the vertical axis gets attracted towards the origin, and then gets attracted towards $E_1.$
The stability of the equilibrium point $E_*$ depends on the geometry of the non-trivial prey nullcline and the position of $E_*$ on it. If there does not exist any feasible coexistence equilibrium point $E_*$ then the boundary equilibrium $E_1$ is the global attractor, otherwise, it is a saddle point. The system \eqref{temp_model} thus undergoes a transcritical bifurcation at $\delta=\frac{1}{1+\eta}$. The stability of the interior equilibrium point will be discussed in the upcoming sections.



\section{Analysis of slow-fast system}\label{sec3}
We consider a topologically equivalent system of the slow-fast system \eqref{temp_model} by re-scaling the time $dt \rightarrow (u^2+\eta)dt,$ such that $(u^2+\eta)>0$ as follows
\begin{equation}\label{poly_sys}
    \begin{aligned}
     \frac{du}{dt} &=u(1-u)(u+\theta)(u^2+\eta)-u^2v=f(u,v),\\
     \frac{dv}{dt} &= \epsilon(u^2v-\delta v(u^2+\eta))=\epsilon g(u,v).
    \end{aligned}
\end{equation}
The parameters are positive with $0<\epsilon\ll 1$.
With the transformation $\tau:=\epsilon t$, the system \eqref{poly_sys} is equivalent to 
\begin{equation}\label{eqn4}
    \begin{aligned}
        \epsilon \frac{du}{d\tau} &= \left((1-u)(u+\theta)(u^2+\eta)-uv\right)u,\\
        \frac{dv}{d\tau} &= \left(u^2 -\delta (u^2+\eta)\right)v.
    \end{aligned}
\end{equation}
The systems \eqref{poly_sys} and \eqref{eqn4} are referred to as fast and slow systems with $`t'$ as the fast timescale and $`\tau'$ as the slow timescale. The variables $u$ and $v$ are known as fast and slow variables. In the limiting sense of the fast system, that is, as $\epsilon=0,$ we obtain the layer systems as
\begin{equation}
    \begin{aligned}
          \frac{du}{dt} = \left((1-u)(u+\theta)(u^2+\eta)-uv\right)u,\,\,
        \frac{dv}{dt} = 0,
    \end{aligned}
\end{equation}
and the slow system is reduced to
\begin{equation}
    \begin{aligned}
         0 = \left((1-u)(u+\theta)(u^2+\eta)-uv\right)u,\,\,
        \frac{dv}{d\tau} = \left(u^2 -\delta (u^2+\eta)\right)v.
    \end{aligned}
\end{equation}
The fast flow is along the horizontal line $v=$ constant, whereas the slow flow is confined to the set 
\begin{align*}
    M_{10}& = \{(u,v)\in \mathbb{R}^2_+: u=0 \},\\
    M_{20} &= \{(u,v)\in \mathbb{R}^2_+: v=\phi(u)= \frac{1}{u}(1-u)(u+\theta)(u^2+\eta) \},
\end{align*}
known as critical manifold. The critical manifold $M_{20}$ is a one-dimensional curve that is normally hyperbolic whenever $\phi'(u)>0$ or $<0.$ The points at which $\phi'(u)=0$ are the fold points of the critical manifold. From $\phi'(u)=0,$ we obtain,
\begin{equation}\label{dphi}
    3u^4-2(1-\theta)u^3-(\theta-\eta)u^2+\eta\theta=0.
\end{equation}
Therefore, the fold points of the critical manifold are the roots of the above quartic equation (if any). We claim that \textcolor{black}{under any of the following parametric restrictions} \textbf{P1:} $\eta<\theta \leq 1,$ \textbf{P2:} $\eta\leq \theta<1,$ \textbf{P3:} $\theta>\max\{1, \eta\},$ and \textbf{P4:} $\theta<\min\{1,\eta\}$ the critical manifold $M_{20}$ can have at most two-feasible fold points where the $u$-components are given by the positive solution of the quartic equation \eqref{dphi} and  $v=\frac{1}{u}(1-u)(u+\theta)(u^2+\eta).$  Let $F(u)=3u^4-2(1-\theta)u^3-(\theta-\eta)u^2+\eta\theta,$ then $F(0) = \eta\theta>0$ and $F(1) = (\theta+1)(\eta+1)>0.$  We argue that if the function $F$ has a minimum for some $u_{\mathrm{min}}\in (0,1)$ and $F(u_{\mathrm{min}})<0,$ then by continuity of the function $F$ we can say that there exist exactly two roots of the equation $F=0$ which corresponds to two fold points. For this we differentiate $F$ with respect to $u$ and from $\frac{dF}{du}=0$ we obtain,
$$u = 0,\, \frac{1-\theta}{4} \pm \frac{\sqrt{9\theta^2+6\theta+9-24\eta}}{12}.$$ 
\textcolor{black}{We first consider the case \textbf{(P1)} and assume $ \Gamma:= 9\theta^2+6\theta+9-24\eta>0$. This implies}
$$ 0< \frac{1-\theta}{4} + \frac{\sqrt{\Gamma}}{12} <1\,\,\text{and}\,\, \frac{1-\theta}{4} - \frac{\sqrt{\Gamma}}{12}<0.$$
We then claim that the function $F$ attains its minimum at $u_{\mathrm{min}}=\frac{1-\theta}{4} + \frac{\sqrt{\Gamma}}{12}.$ This gives $$\frac{d^2F}{du^2}\Big{|}_{u_{\mathrm{min}}} = \frac{\Gamma}{6} + \frac{(1-\theta)}{2}\sqrt{\Gamma} >0,\,\, \text{and}\,\, F(u_{\mathrm{min}}) = \Lambda_1 - \Lambda_2 \sqrt{\Gamma},$$ 
$$\text{where}\,\, \Lambda_1 = \frac{\eta}{8}(\theta^2+\frac{22}{3}\theta+1)-\frac{1}{96}(3(1+\theta^4)+2(\theta^2+4\eta^2)),\,\, \Lambda_2 = \frac{1-\theta}{288}(3\theta^2+2\theta+3-8\eta).$$
Therefore there exist exactly two feasible roots of the polynomial $F(u)$ in the parametric region $\mathcal{R}_1,$ where $$\mathcal{R}_1=\{(\eta,\theta): 0<\eta<\theta\leq 1,\,\, \frac{\sqrt{\Gamma}}{3}-3<\theta<1+\frac{\sqrt{\Gamma}}{3},\,\, \Gamma> \left(\frac{\Lambda_1}{\Lambda_2}\right)^2\},$$
and the two extrema of the curve $\phi(u)$ correspond to two fold points of the critical manifold $M_{20}$. The case \textbf{(P2)} is similar to case \textbf{(P1)}. Under the restriction \textbf{(P3)} we consider sub-cases, either $\eta<1<\theta$ or $1<\eta<\theta.$ We consider the first subcase. Since for $\eta<1<\theta,\,\, \frac{1-\theta}{4} - \frac{\sqrt{\Gamma}}{12}<0.$ Thus two fold points exist for $0< \frac{1-\theta}{4} + \frac{\sqrt{\Gamma}}{12} <1$ which implies $\frac{\sqrt{\Gamma}}{3}-3<\theta<1+\frac{\sqrt{\Gamma}}{3},$ and the minimum value must be negative, that is, $F(u_{\mathrm{min}}) = \Lambda_1+\Lambda_2\sqrt{\Gamma}<0,$ that is $\Gamma < \Big(\frac{\Lambda_1}{\Lambda_1}\Big)^2.$ Therefore, in the parametric region $$\mathcal{R}_2= \Big\{(\eta,\theta): 0<\eta<1<\theta,\,\,\frac{\sqrt{\Gamma}}{3}-3<\theta<1+\frac{\sqrt{\Gamma}}{3},\,\, \Gamma< \left(\frac{\Lambda_1}{\Lambda_2}\right)^2\Big\},$$we obtain two fold points in the interior of $\mathbb{R}_+^2.$ Similarly under the restriction \textbf{(P4)} there can be two sub-cases, either $\theta<1<\eta$ or $\theta<\eta<1.$ We consider the first sub-case and assume that for $\theta<\eta,$ $\Gamma>0.$ This gives us $$\theta<\eta<\frac{1}{8}(3\theta^2+2\theta+3).$$ Since $u_{\mathrm{min}} \in (0,1)$ such that $F({u_{\mathrm{min}}})<0,$ we must have $$\frac{\sqrt{\Gamma}}{3}-3<\theta<1,\,\, \text{and}\,\,\Gamma>\ \left(\frac{\Lambda_1}{\Lambda_2}\right)^2.$$ Therefore under the restriction \textbf{(P4)} we obtain exactly two fold points for $$\mathcal{R}_3= \Big\{(\eta,\theta): 0<\theta<1<\eta,\,\,\frac{\sqrt{\Gamma}}{3}-3<\theta<1,\,\, \Gamma> \left(\frac{\Lambda_1}{\Lambda_2}\right)^2\Big\}.$$
We now consider region $\mathcal{R}$ such as $\mathcal{R}=\mathcal{R}_1\cup \mathcal{R}_2\cup \mathcal{R}_3.$ Thus for $(\eta, \theta)\in \mathcal{R}$ the critical manifold is a $S$-shaped curve with exactly two fold points. Let the two fold points be $P(u_m,v_m)$ (local minimum of the critical manifold $M_{20}$) and $Q(u_M, v_M)$ (local maximum of the critical manifold $M_{20}$). Depending on the structure of the critical manifold $M_{20},$ that as $\lim_{u\rightarrow 0} \phi(u) \rightarrow \infty$ and $\lim_{u\rightarrow 1} \phi(u) \rightarrow 0$ we can infer that $u_m<u_M$. It consists of three branches, namely, $S_0^l$, $S_0^m$ and $S_0^r$, where
\begin{align*}
	S_0^l&=M_{20}\cap \left\{(u,v)\in \IR^2_+ \,\middle|\, 0<u<u_m\right\},\\
	S_0^m&=M_{20} \cap \left\{(u,v)\in \IR^2_+ \,\middle|\, u_m<u< u_M\right\},\\
	S_0^r&=M_{20} \cap \left\{(u,v)\in \IR^2_+ \,\middle|\, u_M<u< 1\right\}.
\end{align*}
From the geometry of the critical manifold $M_{20}$, we conclude that the sub-manifolds $S_0^{l}$ and $S_0^{r}$ are normally hyperbolic attracting whereas $S_0^{m}$ is normally hyperbolic repelling. When the non-trivial predator nullcline $u=\sqrt{\frac{\delta \eta}{1-\delta}}$ intersects the non-trivial prey nullcline $M_{20}$ on the left and right branch of $M_{20},$ that is on $S^l_0$ or $S^r_0$ then the equilibrium is stable. Whereas if $E_*$ lies on $S^m_0$, the equilibrium is unstable. We show the shape of the critical manifold along with the fold points in Fig. \ref{fig:phase_potraits}.

\begin{figure}[H]
    \centering
    \includegraphics[scale=0.5]{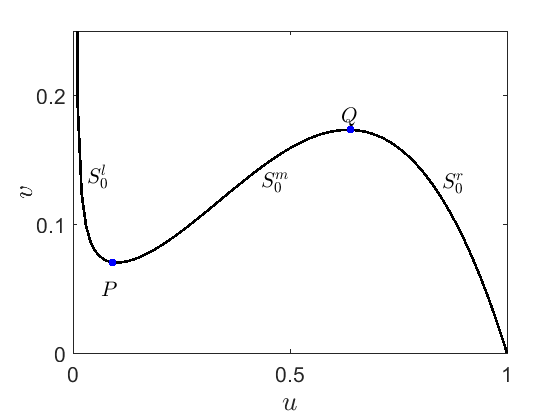}
     \caption{A representation of the critical manifold $M_{20}$ with the left ($S^l_0$), middle ($S^m_0$), and right ($S^r_0$) branch. The blue dots $P$ and $Q$ are the fold points of the critical manifold.}
    \label{fig:phase_potraits}
\end{figure}


\section{Singular Bifurcations}\label{sec4}

\noindent In this section, we focus our study on bifurcation structures in the singularly perturbed system \eqref{poly_sys}. Throughout the analysis, we assume that $(\eta, \theta)\in \mathcal{R}$ so that the critical manifold $M_{20}$ is an $S$-shaped curve having two non-degenerate fold points $P(u_m,v_m)$ and $Q(u_M, v_M)$. We will show that when the stable equilibrium $E_*$ on the left branch $S_0^l$ crosses the fold point $P$ for some threshold parameter value, the system \eqref{poly_sys} undergoes a Hopf bifurcation which is singular in the sense that at the Hopf bifurcation threshold, the eigenvalues of the Jacobian matrix become singular in the singular limit $\epsilon\to 0$. Subsequently, we will show that there will be a change in the criticality of the singular Hopf bifurcation and the system undergoes a codimension-2 Bautin bifurcation which we termed here as singular Bautin bifurcation as the corresponding Hopf bifurcation is singular.

\subsection{Singular Hopf Bifurcation}

We choose $\delta$ as the bifurcation parameter. We assume that for $\delta=\delta_*$, the equilibrium $E_*$ passes through the fold point $P(u_m, v_m)$ of the critical manifold $M_{20}$,  where $\delta_*$ is implicitly given by the equation $F(u_*)=0.$
Consequently, we have the following 
\begin{align}\label{canard condition 1}
  f(u_m,v_m,\delta_*)=0,\,\,g(u_m,v_m,\delta_*)=0,\,\,\text{and},\,\frac{\partial f}{\partial u}(u_m,v_m,\delta_*)=0.
\end{align}
We further assume that 
\begin{align}\label{canard condition 2}
  \frac{\partial f}{\partial v}(u_m,v_m,\delta_*)\ne0,\,\,\frac{\partial^2 f}{\partial u^2}(u_m,v_m,\delta_*)\ne0,\,\,\frac{\partial g}{\partial u}(u_m,v_m,\delta_*)\ne0 \,\,\, \frac{\partial g}{\partial \delta}(u_m,v_m,\delta_*)\ne0.
\end{align}
With the above assumption, the fold point $P$ is now the non-degenerate canard point or the singular contact point of the system. Using the transformation, $\Bar{u} = u-u_m,\,\,\Bar{v} = v-v_m,\,\,\lambda = \delta-\delta_*,$
we write the system \eqref{poly_sys} into the following form
\begin{equation}\label{sf_normal form}
\begin{aligned}
	\frac{d\bar{u}}{dt}&=a_{01}\bar{v}+\frac{a_{20}}{2}\bar{u}^2+a_{11}\bar{u}\bar{v}+\frac{a_{30}}{6}\bar{u}^3+\frac{a_{21}}{2}\bar{u}^2\bar{v}+\frac{a_{40}}{24}\bar{u}^4+\frac{a_{50}}{120}\bar{u}^5,\\
\frac{d\bar{v}}{dt}&=\epsilon(b_{10}\bar{u}+\frac{b_{20}}{2}\bar{u}^2+b_{11}\bar{u}\bar{v}-v_m(u_m^2+\eta)\lambda \\ &-2u_mv_m\lambda\bar{u}-(u_m^2+\eta)\lambda\bar{v} +\frac{b_{21}}{2}\bar{u}^2\bar{v}-v_m\lambda\bar{u}^2-\lambda\bar{u}^2\bar{v}),
\end{aligned}
\end{equation} 
where $\displaystyle a_{ij}=\left.\frac{\partial^{i+j}f}{\partial u^i\partial v^j}\right|_{(u_m,v_m,\delta_*)}$ and $\displaystyle b_{ij}=\left.\frac{\partial^{i+j}g}{\partial u^i\partial v^j}\right|_{(u_m,v_m,\delta_*)}$, i.e.,

\begin{equation*}
  \begin{aligned}
  & a_{10}=0,\,a_{01}=-u_m^2,\,a_{50}=-120,\,b_{01}=0,\\
  &a_{20}=\frac{4\delta_*\eta}{1-\delta_*}(3-3\theta-5u_m)+2\eta(1-3u_m-\theta)+6\theta u_m-2v_m,\,a_{11}=-2u_m,\\
      &a_{30}=24(1-\theta)u_m+6(\theta-\eta)-\frac{60\delta_*\eta}{1-\delta_*},\,a_{21}=-2,\,a_{40}=24(1-\theta)-120u_m,\\
       & b_{10}=2(1-\delta_*)u_mv_m,\,\,b_{20}=2(1-\delta_*)v_m,\,\,
        b_{11}=2(1-\delta_*)u_m,\,\,b_{21}=2(1-\delta_*).
    \end{aligned}
  \end{equation*}
One of the fundamental tools to study the flow of  \eqref{sf_normal form} near the canard point is to use the blow-up transformation
\begin{equation*}
\Phi: \mathbb{S}^3\times [0,\rho]\to\IR^4
\end{equation*}
given by $\bar{u}=r'u',~~\bar{v}=r'^2v',~~\lambda=r'\lambda'~~\epsilon=r'^2\epsilon',~ \text{where}~(u',v',\lambda',\epsilon')\in\mathbb{S}^3.$
The blow-up transformation, desingularizes the vector field  \eqref{sf_normal form}. One can then use the tools of dynamical systems to investigate the dynamics of the blown-up vector field in different charts of the manifold $\mathbb{S}^3\times [0,\rho]$ and connecting the results in different charts, the final results are derived by ``blow down" \cite{krupa2001relaxation,kuehn2015multiple}. The dynamics in the chart $K_2$ which describes a neighbourhood of the upper half-sphere are only necessary to study singular Hopf bifurcation. The blow-up transformation in the chart $K_2$ is given by 
\begin{align}\label{scaling}
\bar{u}=r_2u_2,~~~\bar{v}=r_2^2v_2,~~~\lambda=r_2\lambda_2~~~\epsilon=r_2^2.
\end{align} Using the above transformation and dividing by $r_2$, the system \eqref{sf_normal form} transforms into the following 
\begin{equation}\label{eq:5}
\begin{aligned}
\frac{du_2}{dt}&=a_{01}v_2+\frac{a_{20}}{2}u_2^2+\left(a_{11}u_2v_2+\frac{a_{30}}{6}u_2^3\right)r_2+\left(\frac{a_{21}}{2}u_2^2v_2+\frac{a_{40}}{24}u_2^4\right)r_2^2+\frac{a_{50}}{120}r_2^3u_2^5,\\
\frac{dv_2}{dt}&=b_{10}u_2-v_m(u_m^2+\eta)\lambda_2-2u_mv_mr_2\lambda_2u_2+\frac{b_{20}}{2}r_2u_2^2+b_{11}r_2^2 u_2v_2 \\ 
&-(u_m^2+\eta)r_2^2\lambda_2v_2-v_mr_2^2\lambda_2u_2^2+\frac{b_{21}}{2}r_2^3u_2^2v_2-r_2^4\lambda_2u_2^2v_2.
 \end{aligned}
\end{equation}
The system \eqref{eq:5} has an equilibrium $P_2(u_{2e}, v_{2e})$ with 
\begin{align*}
u_{2e}&=\frac{v_m(u_m^2+\eta)}{b_{10}}\lambda_2+\frac{1}{b_{10}^2}\left(2u_mv_m^2(u_m^2+\eta)-\frac{b_{20}}{2b_{10}}v_m^2(u_m^2+\eta)^2\right)r_2\lambda_2^2+\mathcal{O}(4),\\
v_{2e}&=-\frac{a_{20}}{2a_{01}b_{10}^2}v_m^2(u_m^2+\eta)^2\lambda_2^2+\mathcal{O}(4), 
\end{align*} 
where $\mathcal{O}(4)=\mathcal{O}(|\lambda_2 + r_2|^4)$. The Jacobian matrix at $P_2$ is given by
\begin{align*}
    J=\left[\begin{array}{cc} \alpha_{11} & \alpha_{12}\\
    \alpha_{21} & \alpha_{22}\end{array}\right]
\end{align*}
where \begin{align*}
\alpha_{11}&= \frac{a_{20}}{b_{10}}v_m(u_m^2+\eta)\lambda_2 + \frac{a_{20}}{b_{10}^2}\left(2u_mv_m^2(u_m^2+\eta)-\frac{b_{20}}{2b_{10}}v_m^2(u_m^2+\eta)^2\right)r_2\lambda_2^2\\
& \hspace{4cm} +\left(a_{30}-\frac{a_{20}a_{11}}{a_{01}}\right)\frac{v_m^2(u_m^2+\eta)^2}{2b_{10}^2}r_2\lambda_2^2+\mathcal{O}(4),\\
\alpha_{12}&=a_{01}+\frac{a_{11}}{b_{10}}v_m(u_m^2+\eta)r_2\lambda_2+\mathcal{O}(4),\\
&\alpha_{21}=b_{10}-2u_mv_mr_2\lambda_2+\frac{b_{20}}{b_{10}}v_m(u_m^2+\eta)r_2\lambda_2+\mathcal{O}(4),\\
\alpha_{22}&=\frac{b_{11}}{b_{10}}v_m(u_m^2+\eta)r_2^2\lambda_2-(u_m^2+\eta)r_2^2\lambda_2+\mathcal{O}(4).
\end{align*}
Now, using the transformation $x=u_2-u_{2e}$, $y=v_2-v_{2e}$ and expanding into Taylor's series, we write the system \eqref{eq:5} as
\begin{subequations}\label{eq:6}
\begin{align}
\frac{dx}{dt}&=a_{10}'x+a_{01}'y+\frac{a_{20}'}{2}x^2+a_{11}'xy+\frac{a_{30}'}{6}x^3+\frac{a_{21}'}{2}x^2y+\frac{a_{40}'}{24}x^4+\frac{a_{50}'}{120}x^5,\\
\frac{dy}{dt}&=b_{10}'x+b_{01}'y+\frac{b_{20}'}{2}x^2+b_{11}'xy+\frac{b_{21}'}{2}x^2y,
\end{align}
\end{subequations}
where,
\begin{align*}
a_{10}'&=\alpha_{11},\,a_{01}'=\alpha_{12},\,b_{10}'=\alpha_{21},\,b_{01}'=\alpha_{22},\,
a_{20}'=a_{20}+\frac{a_{30}}{b_{10}}v_m(u_m^2+\eta)r_2\lambda_2+\mathcal{O}(4),\\
a_{11}'&=a_{11}r_2+\frac{a_{21}}{b_{10}}v_m(u_m^2+\eta)r_2\lambda_2+\mathcal{O}(4),\\
a_{30}'&=a_{30}r_2+\frac{a_{40}}{b_{10}}v_m(u_m^2+\eta)r_2\lambda_2+\mathcal{O}(4),\\
a_{21}'&=a_{21}r_2^2,\,a_{40}'=a_{40}r_2^2+\mathcal{O}(4),\,a_{50}'=a_{50}r_2^3,\,
b_{20}'=b_{20}r_2-2v_mr_2\lambda_2+\mathcal{O}(4),\\
b_{11}'&=b_{11}r_2^2+\mathcal{O}(4),\,\text{and}\, b_{21}'=b_{21}r_{2}^3+\mathcal{O}(4).
\end{align*}

\noindent For the Hopf bifurcation, Trace $J=0$ gives rise to $\lambda_2=0$ and in terms of original parameters this transforms into
$$\delta=\delta_H=\delta_*.$$ Thus, we observe that the Hopf bifurcation threshold does not depend on $\epsilon$. For $|\delta-\delta_H|$ small, the complex conjugate eigenvalues of the Jacobian matrix $J$ are given by $\mu$ and $\bar{\mu}$, where
\begin{align*}
\mu=\alpha+i\beta,\,\,\,\alpha=\frac{\alpha_{11}+\alpha_{22}}{2},\,\,\,\beta=\frac{1}{2}\sqrt{4(\alpha_{11}\alpha_{22}-\alpha_{12}\alpha_{21})-(\alpha_{11}+\alpha_{22})^2}.
\end{align*}
Consequently, at $\delta=\delta_H$, we have 
$\alpha(\delta_H)= 0,\,\,\beta(\delta_H)=\beta_0+\mathcal{O}(4)$
where $\beta_0=\sqrt{-a_{01}b_{10}}$. We also have the following
\begin{align*}
\frac{d}{d\delta}\left(Re(\mu(\delta))\right)\bigg|_{\delta=\delta_H}&\neq 0.
\end{align*}

\noindent Let $q=\left(q_1,q_2\right)^T\in \mathbb{C}$ be an eigenvector of $J$ corresponding to the eigenvalue $\mu$ and $p=\left(p_1, p_2\right)^T$ be an eigenvector of $J^T$ corresponding to $\bar{\mu}$ such that 
$\langle p, q\rangle= \bar{p}_1q_1+\bar{p}_2q_2=1$. We then have

\begin{align*}
\left[ \begin {array}{c} q_1 \\ q_2
\end {array} \right] = \left[ \begin {array}{c} \alpha_{12} \\ \mu-\alpha_{11}
\end {array} \right],~~~\left[ \begin {array}{c} p_1 \\ p_2
\end {array} \right] =\frac{1}{2\alpha_{12}\bar{\mu}-\alpha_{12}(\alpha_{11}+\alpha_{22})}\left[ \begin {array}{c} \bar{\mu}-\alpha_{22}\\ \alpha_{12}
\end {array} \right]
\end{align*}

With the help of the transformation $z =\bar{p}_1x+\bar{p}_2y, \,\,\,z\in\mathbb{C},$ the system \eqref{eq:6} can be written as 
\begin{align}\label{eq:7}
    \dot{z}=\mu z+\sum_{k+l=2}^5\frac{g_{kl}}{k!l!}z^k\bar{z}^l,
\end{align}
where the expressions of $g_{kl}$'s are given by 
\begin{subequations}
\begin{align*}
    g_{11}&=\bar{p}_1(a_{20}'q_1\bar{q}_1+a_{11}'(q_1\bar{q}_2+q_2\bar{q}_1)+\bar{p}_2(b_{20}'q_1\bar{q}_1+b_{11}'(q_1\bar{q}_2+q_2\bar{q}_1),\,g_{05}=a_{50}'\bar{p}_1\bar{q}_1^5,\\
    g_{20}&=\bar{p}_1(a_{20}'q_1^2+2a_{11}'q_1q_2)+\bar{p}_2(b_{20}'q_1^2+2b_{11}'q_1q_2),\\
    g_{02}&=\bar{p}_1(a_{20}'\bar{q}_1^2+2a_{11}'\bar{q}_1\bar{q}_2)+\bar{p}_2(b_{20}'\bar{q}_1^2+2b_{11}'\bar{q}_1\bar{q}_2),\\
    g_{21}&=\bar{p}_1(a_{30}'q_1^2\bar{q}_1+a_{21}'(q_1^2\bar{q}_2+2q_1\bar{q}_1q_2))+b_{21}'\bar{p}_2 (q_1^2\bar{q}_2+2q_1\bar{q}_1q_2),\\
    g_{22}&=a_{40}'\bar{p}_1q_1^2\bar{q}_1^2,\,\,g_{23}=a_{50}'\bar{p}_1q_1^2\bar{q}_1^3,\\
    g_{12}&=\bar{p}_1(a_{30}'q_1\bar{q}_1^2+a_{21}'(\bar{q}_1^2q_2+2q_1\bar{q}_1\bar{q}_2))+b_{21}'\bar{p_2} (\bar{q}_1^2q_2+2q_1\bar{q}_1\bar{q}_2),\\
    g_{13}&=a_{40}'\bar{p}_1q_1\bar{q}_1^3,\,\,g_{14}=a_{50}'\bar{p}_1q_1\bar{q}_1^4,\,\,g_{30}=\bar{p}_1(a_{30}'q_1^3+3a_{21}'q_1^2q_2)+3b_{21}'\bar{p}_2q_1^2q_2,\\
    g_{03}&=\bar{p}_1(a_{30}'\bar{q_1}^3+3a_{21}'\bar{q}_1^2\bar{q}_2)+3b_{21}'\bar{p}_2\bar{q}_1^2\bar{q}_2,\,\, g_{40}=a_{40}'\bar{p}_1q_1^4,\,\,g_{04}=a_{40}'\bar{p}_1\bar{q}_1^4,\\
    g_{50}&=a_{50}'\bar{p}_1q_1^5,\,\,g_{41}=a_{50}'\bar{p}_1q_1^4\bar{q}_1,\,\, g_{32}=a_{50}'\bar{p}_1q_1^3\bar{q}_1^2,\,g_{31}=a_{40}'\bar{p}_1q_1^3\bar{q}_1.
\end{align*}
\end{subequations}

Using the following locally invertible parameter-dependent change of complex coordinate,
\begin{align}
    z=w+\frac{h_{20}}{2}w^2+h_{11}w\bar{w}+\frac{h_{02}}{2}\bar{w}^2+\frac{h_{30}}{6}w^3+\frac{h_{12}}{2}w\bar{w}^2+\frac{h_{03}}{6}\bar{w}^3,
\end{align}
the system \eqref{eq:7} can be written in a neighbourhood of $\delta=\delta_H$ as 
\begin{align}\label{eq:8}
\dot{w}=\mu w+c_1w^2\bar{w}+\mathcal{O}\left(|w|^4\right),
\end{align}
where  
\begin{equation}\label{eq:9}
\begin{aligned}
    c_1&=\frac{g_{20}g_{11}\left(2\mu+\bar{\mu}\right)}{2|\mu|^2}+\frac{|g_{11}|^2}{\mu}+\frac{|g_{02}|^2}{2\left(2\mu-\bar{\mu}\right)}+\frac{g_{21}}{2},\\
    &h_{20}=\frac{g_{20}}{\mu},\,\, h_{11}=\frac{g_{11}}{\bar{\mu}},\,\,\, h_{02}=\frac{g_{02}}{2\bar{\mu}-\mu},\\
    h_{30}&=\frac{3}{\mu}\left(\frac{g_{20}h_{20}}{2}+\frac{g_{11}\overline{h}_{02}}{2}+\frac{g_{30}}{6}\right),\,\,
    h_{12}=\frac{1}{2\bar{\mu}}\left(g_{20}h_{02}+2g_{11}h_{11}+g_{11}\overline{h}_{20}+g_{12}\right),
    \\
    h_{03}&=\frac{1}{3\bar{\mu}-\mu}\left(g_{03}+3g_{11}h_{02}+3g_{02}\overline{h}_{20}\right),
\end{aligned}
\end{equation}
and the first Lyapunov coefficient using \cite{kuznetsov1998elements} is given by 
\begin{align}
    L_1(\delta_H)&=\frac{\Re \left(c_1\right)}{\beta}\bigg|_{\delta=\delta_H}=\frac{1}{2\beta^2}\Re\left(i g_{20}g_{11}+\beta_0 g_{21}\right)\bigg|_{\delta=\delta_H},
\end{align}
where $\Re(\cdot)$ stands for the real part of $(\cdot)$. 

At $\delta=\delta_H$,
\begin{align*}
\left[ \begin {array}{c} q_1(\delta_H) \\ q_2(\delta_H) 
\end {array} \right] = \left[ \begin {array}{c} a_{01} \\ i\beta_0
\end {array} \right],~~~\left[ \begin {array}{c} p_1(\delta_H) \\ p_2(\delta_H) 
\end {array} \right] =\left[ \begin {array}{c} \frac{1}{2a_{01}} \\ \frac{i}{2\beta_0}
\end {array} \right],
\end{align*}
and thus, the first Lyapunov coefficient is 
\begin{align}\label{Eq:L1}
 L_1(\sqrt{\epsilon})&=-\frac {a_{01}}{4\beta_0b_{10}}A\sqrt {\epsilon},
\end{align}
where 
\begin{align}\label{criticality}
  A&= a_{01}a_{20}b_{20}-a_{01}a_{30}b_{10}+a_{11}a_{20}b_{10}\textcolor{red}{.}
\end{align}

Thus, we see that all the conditions of the Hopf bifurcation theorem are satisfied and consequently, the system \eqref{poly_sys} undergoes a Hopf bifurcation at $\delta=\delta_H(\sqrt{\epsilon})=\delta_*.$ It also follows that for $0 < \epsilon \ll 1$, the leading order coefficient of $L_1$ i.e., $A$ determines the criticality of the singular Hopf bifurcation. The criticality changes if $A$ changes its sign from negative to positive and consequently, the singular Hopf bifurcation is degenerate if $A=0$. We now summarize the above-mentioned results by the following theorem.

\begin{theorem}\label{sf_singular_Hopf}
Let\textcolor{black}{,} $(\theta,\eta)\in \mathcal{R}$ and the conditions \eqref{canard condition 1} and \eqref{canard condition 2} hold. Then $\exists$ $\epsilon_0>0$ and $\delta_0>0$ such that for $0<\epsilon<\epsilon_0$ and $|\delta-\delta_*|<\delta_0$, the system \eqref{poly_sys} has an equilibrium point $P_2$ in a neighbourhood of the fold point $P$ which converges to $P$ as $(\epsilon, \delta)\to (0,\delta_*)$. The system \eqref{poly_sys} undergoes a singular Hopf bifurcation at 
\begin{align}
\delta_H(\sqrt{\epsilon})=\delta_*+\mathcal{O}(\epsilon^{\frac{5}{2}})
\end{align}

The Hopf bifurcation is non-degenerate when $A\neq 0$. It is supercritical if $A<0$ and sub-critical if $A>0$ where $A$ is given by \eqref{criticality}.
\end{theorem}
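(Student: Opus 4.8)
The plan is to assemble the statement from the chart-$K_2$ computation already carried out, verifying in order the persistence of the equilibrium, the three genericity conditions of the Hopf bifurcation theorem, and the sign of the first Lyapunov coefficient.

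First I would establish the equilibrium $P_2$. Applying the implicit function theorem to the equilibrium equations of \eqref{eq:5} in $(u_2,v_2)$, with $(r_2,\lambda_2)$ as parameters, the relevant Jacobian at $(u_2,v_2,r_2,\lambda_2)=(0,0,0,0)$ is the constant block $\left(\begin{smallmatrix}0 & a_{01}\\ b_{10} & 0\end{smallmatrix}\right)$, whose determinant $-a_{01}b_{10}$ is nonzero because $a_{01}=-u_m^2\neq0$ and $b_{10}=2(1-\delta_*)u_mv_m\neq0$ by \eqref{canard condition 2}. Hence a unique smooth branch $P_2(u_{2e},v_{2e})$ exists for $(r_2,\lambda_2)$ small, with the expansions already displayed, and since $\bar{u}=r_2u_2$, $\bar{v}=r_2^2v_2$ with $r_2=\sqrt{\epsilon}$, the equilibrium collapses onto the fold point, i.e. $P_2\to P$ as $(\epsilon,\delta)\to(0,\delta_*)$.

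Next I would check the Hopf conditions at $P_2$. For nonhyperbolicity, $\mathrm{Tr}\,J=\alpha_{11}+\alpha_{22}$ has leading term $\tfrac{a_{20}}{b_{10}}v_m(u_m^2+\eta)\lambda_2$ with $a_{20}\neq0$, so the implicit function theorem applied to $\mathrm{Tr}\,J=0$ gives a unique branch $\lambda_2=\lambda_2^*(r_2)$; since every displayed term of $\alpha_{11}$ and $\alpha_{22}$ carries a factor $\lambda_2$, the lowest pure-$r_2$ contribution from the $\mathcal{O}(4)$ remainder enters at order $r_2^4$, forcing $\lambda_2^*=\mathcal{O}(r_2^4)$ and hence, via $\lambda=r_2\lambda_2$ and $r_2=\sqrt{\epsilon}$, the threshold $\delta_H(\sqrt{\epsilon})=\delta_*+\mathcal{O}(\epsilon^{5/2})$. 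Transversality $\tfrac{d}{d\delta}\Re(\mu)|_{\delta_H}\neq0$ follows from $\Re(\mu)=\tfrac12\mathrm{Tr}\,J$ together with $\partial_{\lambda_2}\mathrm{Tr}\,J\neq0$ and the nonsingular relation $\lambda_2=(\delta-\delta_*)/r_2$. Finally $\beta(\delta_H)=\beta_0+\mathcal{O}(4)$ with $\beta_0=\sqrt{-a_{01}b_{10}}>0$ (as $a_{01}<0$, $b_{10}>0$) confirms a genuine complex-conjugate pair crossing the imaginary axis; the bifurcation is singular because it is visible only after blow-up and its threshold is pinned to the fold point in the limit $\epsilon\to0$.

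The criticality is the heart of the argument, and its main obstacle is the algebraic bookkeeping. I would reduce \eqref{eq:7} to the normal form \eqref{eq:8} via the displayed near-identity transformation, obtaining $c_1$ as in \eqref{eq:9}, and then evaluate $L_1=\Re(c_1)/\beta$ at $\delta=\delta_H$ using the explicit eigenvectors $q(\delta_H)$ and $p(\delta_H)$. The delicate point is to track which powers of $r_2=\sqrt{\epsilon}$ survive in $g_{20},g_{11},g_{21}$ so that, after the cancellations, the leading order collapses to $L_1(\sqrt{\epsilon})=-\tfrac{a_{01}}{4\beta_0b_{10}}A\sqrt{\epsilon}$ with $A$ as in \eqref{criticality}. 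Once \eqref{Eq:L1} is in hand the sign analysis is immediate: the prefactor $-\tfrac{a_{01}}{4\beta_0b_{10}}$ is positive because $-a_{01}>0$, $\beta_0>0$, $b_{10}>0$, and $\sqrt{\epsilon}>0$, so for $0<\epsilon\ll1$ the sign of $L_1$ equals that of $A$. By the standard criticality criterion, $A<0$ gives $L_1<0$ and a supercritical bifurcation, $A>0$ gives $L_1>0$ and a subcritical one, with degeneracy at $A=0$, which completes the proof.
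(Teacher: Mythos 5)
Your proposal follows exactly the paper's own route: the chart-$K_2$ blow-up system \eqref{eq:5}, the equilibrium branch $P_2$, the trace analysis of the Jacobian $J$ for the Hopf threshold, and the normal-form computation of the first Lyapunov coefficient \eqref{Eq:L1}, with criticality read off from the sign of $A$ in \eqref{criticality}. It is correct, and in fact slightly more careful than the paper in two spots: the implicit-function-theorem justification of the equilibrium branch $P_2$, and the argument that the $\mathcal{O}(4)$ remainders force $\lambda_2^*=\mathcal{O}(r_2^4)$, which is precisely what produces the $\mathcal{O}(\epsilon^{5/2})$ correction to $\delta_H$ that the theorem asserts but the paper's text (which simply sets $\lambda_2=0$) never derives.
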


\subsection{Singular Bautin Bifurcation}

We observe that for $\theta=\theta_B$, $A=0$ where $\theta_B$ is given by 
\begin{align}
 \theta=\theta_B=-\frac{{u_m} \left(\left(-1+\delta^* \right) {u_m}^{3}+\left(-5 \delta^* +5\right) {u_m}^{2}-\eta  \left(-1+\delta^* \right) {u_m} -6 \delta^*  \eta \right)}{\left(5 \delta^* -5\right) {u_m}^{3}+\left(-1+\delta^* \right) {u_m}^{2}+6 \delta^*  \eta  {u_m} -\eta  \left(-1+\delta^* \right)}.   
\end{align}
Consequently, it follows from theorem \ref{sf_singular_Hopf} that the Hopf bifurcation is degenerate i.e., there is a change in the criticality of the singular Hopf bifurcation as $\theta$ passes through $\theta=\theta_B$. We now proceed to show that in such a case, the system undergoes a codimension-2 Bautin bifurcation by computing the second Lyapunov coefficient, which determines the criticality of the Bautin bifurcation.

\noindent To derive the normal form for Bautin bifurcation, we repeat the same process as carried out in the previous section, i.e., we use the following locally invertible parameter-dependent change of complex coordinate
\begin{align*}
    z&=w+\frac{h_{20}}{2}w^2+h_{11}w\bar{w}+\frac{h_{02}}{2}\bar{w}^2+\frac{h_{30}}{6}w^3+\frac{h_{12}}{2}w\bar{w}^2+\frac{h_{03}}{6}\bar{w}^3+\frac{h_{40}}{24}w^4\\
    & \hspace{1cm}+\frac{h_{31}}{6}w^3\bar{w} +\frac{h_{22}}{4}w^2\bar{w}^2+\frac{h_{13}}{6}w\bar{w}^3+\frac{h_{04}}{24}\bar{w}^4+\frac{h_{50}}{120}w^5\\
    & \hspace{2cm}+\frac{h_{41}}{24}w^4\bar{w}+\frac{h_{23}}{12}w^2\bar{w}^3+\frac{h_{14}}{24}w\bar{w}^4+\frac{h_{05}}{120}\bar{w}^5,
\end{align*}
to eliminate all the quadratic terms and bi-quadratic terms of the equation \eqref{eq:7} and left with only resonant cubic and fifth-degree terms. The equation \eqref{eq:7} then transforms into, 
\begin{align}
    \dot{w}=\mu w+c_1w^2\bar{w}+c_2w^3\bar{w}^2+\mathcal{O}\left(|w|^6\right),
\end{align} where the expressions for $c_1$, $h_{ij}, 2\leq i+j\leq 3$ are given by \eqref{eq:9} and the expressions for $c_2$, $h_{ij}, i+j=4$ are given below. We have not mentioned the expressions for $h_{ij}, i+j=5$ here, as these coefficients are no longer required to compute the second Lyapunov coefficient.

\begin{align*}
    c_2 &=\frac{1}{2}g_{20}\left(\frac{h_{20}h_{12}}{2}+\frac{h_{22}}{2}+\frac{h_{30}h_{02}}{6}\right)\\
    & \hspace{2cm}+g_{11}\left(\frac{\bar{h}_{22}}{4}+\frac{h_{11}\bar{h}_{12}}{2}+\frac{h_{02}\bar{h}_{03}}{12}+\frac{h_{30}\bar{h}_{20}}{12}+\frac{h_{12}\bar{h}_{02}}{4}+\frac{h_{31}}{3}\right)\\
    & \hspace{2cm} +\frac{g_{02}}{2}\left(\frac{\bar{h}_{20}\bar{h}_{03}}{6}+
    \bar{h}_{11}\bar{h}_{12}+\frac{\bar{h}_{13}}{3}\right)+\frac{1}{6}g_{30}\left(3{h_{11}}^2+\frac{3h_{12}}{2}+\frac{3h_{20}h_{02}}{2}\right)\\
    & \hspace{2cm} +\frac{1}{2}g_{21}\left(\frac{|h_{20}|^2}{2}+2|h_{11}|^2+h_{20}h_{11}+\frac{|h_{02}|^2}{2}\right)\\
    & \hspace{2cm}+\frac{1}{2}g_{12}\left(\frac{h_{30}}{6}+\bar{h}_{11}^2+\bar{h}_{11}h_{20}+h_{11}\bar{h}_{02}+\bar{h}_{12}\right)\\
    & \hspace{2cm} \hspace{.2cm} +\frac{1}{6}g_{03}\left(\frac{\bar{h}_{03}}{2}+3\bar{h}_{11}\bar{h}_{02}\right)+\frac{g_{40}h_{02}}{12}+\frac{g_{31}}{6}\left(3h_{11}+\frac{\bar{h}_{20}}{2}\right)\\
    & \hspace{2cm}+\frac{g_{22}}{4}\left(2\bar{h}_{11}+h_{20}\right)+\frac{g_{13}\bar{h}_{02}}{4}+\frac{g_{32}}{12},
\end{align*}

\begin{align*}
    h_{40}&=\frac{8}{\mu}\left[\frac{g_{20}}{2}\left(\frac{h_{20}^2}{4}+
    \frac{h_{30}}{3}\right)+g_{11}\left(\frac{\bar{h}_{03}}{6}+\frac{h_{20}\bar{h}_{02}}{4}\right)+\frac{g_{02}\bar{h}_{02}^2}{8}
    +\frac{g_{30}h_{20}}{4}+\frac{g_{21}\bar{h}_{02}}{4}+\frac{g_{40}}{24}\right],\\
    h_{31}&=\frac{6}{2\mu+\bar{\mu}}\left[\frac{g_{20}}{2}\left(h_{21}+h_{20}h_{11}\right)+g_{11}\left(\frac{\bar{h}_{12}}{2}+\frac{h_{20}\bar{h}_{11}}{2}+\frac{h_{11}\bar{h}_{02}}{2}+\frac{h_{30}}{6}\right)\right.\\
    &\hspace{.2cm}+\frac{g_{02}}{2}\left(\frac{\bar{h}_{03}}{3}+\bar{h}_{11}\bar{h}_{02}\right)
      +\left.\frac{g_{30}h_{11}}{2}+\frac{g_{21}}{2}\left(h_{20}+\bar{h}_{11}\right)+\frac{g_{12}\bar{h}_{02}}{2}+\frac{g_{31}}{6}-c_1h_{20}\right],\\
    h_{22}&=\frac{4}{\mu+2\bar{\mu}}\left[\frac{g_{20}}{2}\left(h_{11}^2+h_{12}+\frac{h_{20}h_{02}}{2}\right)+g_{11}\left(\frac{|h_{20}|^2}{4}+|h_{11}|^2+\frac{|h_{02}|^2}{4}\right)
    + \frac{g_{21}}{2}\left(2h_{11}+\frac{\bar{h}_{20}}{2}\right)\right.\\
        & \hspace{.2cm} +\left.\frac{g_{02}}{2}\left(\frac{\bar{h}_{20}\bar{h}_{02}}{2}+\bar{h}_{11}^2+\bar{h}_{12}\right) +\frac{g_{30}h_{02}}{4}
        +\frac{g_{12}}{2}\left(2\bar{h}_{11}+\frac{h_{20}}{2}\right)+\frac{g_{03}\bar{h}_{02}}{4}+\frac{g_{22}}{4}-2h_{11}\Re(c_1)\right],\\
    h_{13}&=\frac{2}{\bar{\mu}}\left[\frac{g_{20}}{2}\left(h_{11}h_{02}+\frac{h_{03}}{3}\right)+g_{11}\left(\frac{\bar{h}_{30}}{6}+\frac{h_{12}}{2}+\frac{h_{11}\bar{h}_{20}}{2}+\frac{\bar{h}_{11}h_{02}}{2}\right)+\frac{g_{02}}{2}\left(\bar{h}_{21}+\bar{h}_{11}\bar{h}_{20}\right)\right.\\
    & \hspace{.2cm} +\left.\frac{g_{21}h_{02}}{2}+\frac{g_{12}\left(h_{11}+\bar{h}_{20}\right)}{2}+\frac{g_{03}\bar{h}_{11}}{2}+\frac{g_{13}}{6}-\bar{c}_1h_{02}\right],\\
    h_{04}&=\frac{24}{4\bar{\mu}-\mu}\left[\frac{g_{20}h_{02}^2}{8}+g_{11}\left(\frac{h_{03}}{6}+\frac{h_{02}\bar{h}_{20}}{4}\right)+\frac{g_{02}}{2}\left(\frac{\bar{h}_{20}^2}{4}+\frac{\bar{h}_{30}}{3}\right)
    +\frac{g_{12}h_{02}}{4}+\frac{g_{03}\bar{h}_{20}}{4}+\frac{g_{04}}{24}\right].
\end{align*}

\noindent Assuming,
\begin{align}\label{Bautin_1}
L_1(\delta=\delta_H, \theta=\theta_B)=\alpha(\delta=\delta_H, \theta=\theta_B)=0,
\end{align}
the second Lyapunov coefficient at $(\delta=\delta_H, \theta=\theta_B)$ is given by 
\begin{align}\label{second Lyapunov coeffiecient_1}
L_2(\delta=\delta_H, \theta=\theta_B)=\frac{\Re \left(c_2\right)}{\beta}\bigg|_{(\delta=\delta_H, \theta=\theta_B)}.
\end{align}

\noindent An expression for $L_2(\delta=\delta_H, \theta=\theta_B)$ as mentioned in \cite{kuznetsov1998elements} in compact form is given by the following,
\begin{align*}
&12L_2(\delta=\delta_H,\theta=\theta_H)\\
& \hspace{1.cm}=\frac{\Re(g_{32})}{\beta}+\frac{1}{\beta^2}\Im\left[g_{20}\bar{g}_{31}-g_{11}\left(4g_{31}+3\bar{g}_{22}\right)-\frac{1}{3}g_{02}\left(g_{40}+
\bar{g}_{13}\right)-g_{30}g_{12}\right]\\
& \hspace{1cm} +\frac{1}{\beta^3}\left\{\Re\left[g_{20}\left(\bar{g}_{11}(3g_{12}-\overline{g_{30}})+
g_{02}(\bar{g}_{12}-\frac{g_{30}}{3})+\frac{1}{3}\bar{g}_{02}g_{03}\right)\right.\right.\\
& \hspace{1cm} \left.\left. +~ g_{11}\left(\bar{g}_{02}(\frac{5}{3}\bar{g}_{30}+3g_{12})+\frac{1}{3}g_{02}\bar{g}_{03}-4g_{11}g_{30}\right)\right]+3\Im\left(g_{20}g_{11}\right)\Im(g_{21})\right\}\\
& \hspace{1cm} +\frac{1}{\beta^4}\left\{\Im\left[g_{11}\bar{g}_{02}(\bar{g}_{20}^2-
3\bar{g}_{20}g_{11}-4g_{11}^2)\right]+\Im(g_{20}g_{11})\left[3\Re(g_{20}g_{11})-2|g_{02}|^2\right]\right\},
\end{align*}
where all the expressions of $g_{kl}$ and $\beta$ are evaluated at the point $(\delta=\delta_H, \theta=\theta_B)$, and  $\Im(\cdot)$ stands for the imaginary part of $(\cdot)$. After some computation, we have
\begin{align}\label{second_Lyapunov_coefficient}
  L_2(\sqrt{\epsilon})&=  B\epsilon^{\frac{3}{2}},  
\end{align}
with
\begin{equation}\label{second Lyapunov coefficient_2}
\begin{aligned}
B=&\frac{a_{01}^{4}b_{10}}{144 \beta_0^{7}} \Big(18a_{11}^{3}a_{20} b_{10}^{2}+\left(27a_{20}b_{20} -2a_{30}b_{10}\right)a_{01} a_{11}^{2}b_{10}+\left(7a_{30} b_{10} b_{20}-27 b_{20}^{2} a_{20}  -10 b_{10}^{2} a_{40} \right) a_{01}^{2} a_{11}\\
   &+2\left(10 a_{40} b_{20} -3 a_{50} b_{10} \right) a_{01}^{3} b_{10} +\left(48 b_{20}a_{20}^{2}b_{11} +36a_{21}a_{30}b_{10}^{2}-30 a_{20}^{2} b_{21} b_{10} \right)a_{01}^{2}\Big)
\end{aligned}
\end{equation}

We have $A=0$ for $\theta=\theta_B$ and consequently, for fixed $0 < \epsilon \ll 1$ the criticality of the second Lyapunov coefficient $L_2$ at the point $(\delta=\delta_H, \theta=\theta_B)$ is determined by $B$. 

\noindent Thus, assuming the following conditions, 
\begin{equation}\label{Bautin_2}
\begin{aligned}
& B(\delta=\delta_H, \theta=\theta_B) \neq 0,\,\,\text{and}\,\,
{\rm Det} \left[ \begin {array}{cc} \frac{\partial \alpha}{\partial\delta} & \frac{\partial \alpha}{\partial\theta}\\ \noalign{\medskip} 
\frac{\partial L_1}{\partial\delta} & \frac{\partial L_1}{\partial\theta}
\end {array} \right]_{\delta=\delta_H,\ \theta=\theta_B}\neq 0,
\end{aligned}
\end{equation}
we observe that all the conditions of the Bautin bifurcation hold \cite{kuznetsov1998elements} and the system undergoes a codimension-2 Bautin bifurcation at $(\delta, \theta)=(\delta_H, \theta_B)$. We now summarize the above results by the following theorem.

\begin{theorem}\label{sf_singular_Bautin}
Assume $(\theta,\eta)\in \mathcal{R}$ and the conditions \eqref{canard condition 1}, \eqref{canard condition 2}, \eqref{Bautin_1} and \eqref{Bautin_2} holds. Then $\exists$ $\epsilon_0>0$, $\delta_0>0$ and $\theta_0>0$ such that for $0<\epsilon<\epsilon_0$ and $|\delta-\delta_*|<\delta_0$, $|\theta-\theta_*|<\theta_0$ the system \eqref{poly_sys} has an equilibrium point $P_2$ in a neighbourhood of the fold point $P$ which converges to $P$ as $(\epsilon, \delta, \theta)\to (0,\delta_*, \theta_*)$.
The system \eqref{poly_sys} undergoes a codimension-2 Bautin bifurcation at $(\delta, \theta)=(\delta_H, \theta_B)$.
\end{theorem}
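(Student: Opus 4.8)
The plan is to reduce the claimed \emph{singular} Bautin bifurcation to the classical (non-singular) Bautin bifurcation inside the blow-up chart $K_2$, verify the four genericity conditions of the Bautin bifurcation theorem of \cite{kuznetsov1998elements} for the normalized family \eqref{eq:8} extended to fifth order, and then transfer the conclusion back to \eqref{poly_sys} by blow-down. Since the entire reduction — the scaling \eqref{scaling}, the shifted system \eqref{eq:6}, the complex form \eqref{eq:7}, the normalizing change of coordinates, and the explicit resonant coefficients $c_1$ and $c_2$ — has already been assembled, the proof amounts to checking that the hypotheses of that theorem are met at $(\delta,\theta)=(\delta_H,\theta_B)$ and that the equilibrium behaves as asserted.

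First I would establish the existence and convergence of $P_2$ exactly as in Theorem \ref{sf_singular_Hopf}: applying the implicit function theorem to the equilibrium equations of \eqref{eq:5} yields a locally unique equilibrium $P_2(u_{2e},v_{2e})$ with the expansion displayed after \eqref{eq:5}. Because $r_2=\sqrt{\epsilon}$ and $\lambda_2=(\delta-\delta_*)/r_2$, both $u_{2e}$ and $v_{2e}$ tend to $0$ as $(\epsilon,\delta,\theta)\to(0,\delta_*,\theta_*)$, and under the blow-down $\bar u=r_2u_2,\ \bar v=r_2^2v_2$ this equilibrium converges to the fold point $P$, which gives the first assertion of the theorem.

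Next I would verify the genericity conditions in the order required by \cite{kuznetsov1998elements}. The eigenvalue condition holds because at $\delta=\delta_H$ the Jacobian $J$ has eigenvalues $\pm i\beta_0+\mathcal{O}(4)$ with $\beta_0=\sqrt{-a_{01}b_{10}}\neq0$, guaranteed by \eqref{canard condition 2} since $a_{01}=-u_m^2$ and $b_{10}=2(1-\delta_*)u_mv_m$ are nonzero. The vanishing of the first Lyapunov coefficient is condition \eqref{Bautin_1}: from \eqref{Eq:L1} we have $L_1=-\tfrac{a_{01}}{4\beta_0 b_{10}}A\sqrt{\epsilon}$, which vanishes precisely when $A=0$, i.e.\ at $\theta=\theta_B$. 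The nondegeneracy of the second Lyapunov coefficient follows from \eqref{second_Lyapunov_coefficient}--\eqref{second Lyapunov coefficient_2}, since $L_2=B\epsilon^{3/2}$ is nonzero for all small $\epsilon>0$ whenever $B\neq0$, the first part of \eqref{Bautin_2}. Finally, the transversality condition is the nonvanishing of the determinant in \eqref{Bautin_2}, which states exactly that the map $(\delta,\theta)\mapsto\bigl(\alpha(\delta,\theta),L_1(\delta,\theta)\bigr)$ is a local diffeomorphism near $(\delta_H,\theta_B)$; this is what lets $(\delta,\theta)$ serve as genuine unfolding parameters. With these four conditions in hand, the Bautin theorem produces a smooth, locally invertible, parameter-dependent transformation bringing \eqref{eq:8} to the Bautin normal form, and blow-down yields the codimension-2 Bautin bifurcation of \eqref{poly_sys} at $(\delta,\theta)=(\delta_H,\theta_B)$.

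The main obstacle is not the abstract invocation of the theorem but the computation underlying \eqref{second Lyapunov coefficient_2}: obtaining $c_2$ requires the fourth-order coefficients $h_{40},h_{31},h_{22},h_{13},h_{04}$ together with careful bookkeeping of the powers of $r_2=\sqrt{\epsilon}$ introduced by the scaling \eqref{scaling}, so that the leading $\mathcal{O}(\epsilon^{3/2})$ term of $L_2$ — and hence the sign-determining coefficient $B$ — is isolated correctly. A secondary difficulty is verifying the transversality determinant symbolically, since it requires differentiating both $\alpha$ and $L_1$ with respect to $\delta$ and $\theta$ through their implicit dependence on $u_m$, $v_m$, and $\delta_*$; here one would want to confirm that the determinant does not vanish identically on $\mathcal{R}$ but only on a lower-dimensional set, so that the hypotheses of the theorem are generically satisfiable.
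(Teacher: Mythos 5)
Your proposal is correct and follows essentially the same route as the paper: work in the blow-up chart $K_2$, obtain the equilibrium $P_2$ with the expansion given after \eqref{eq:5}, carry the normal-form transformation to fifth order to extract $c_1$ and $c_2$, identify $L_1=-\tfrac{a_{01}}{4\beta_0 b_{10}}A\sqrt{\epsilon}$ and $L_2=B\epsilon^{3/2}$, and then invoke Kuznetsov's Bautin bifurcation theorem under the hypotheses \eqref{Bautin_1}--\eqref{Bautin_2} (nonzero $\beta_0$, vanishing $L_1$, nonvanishing $L_2$, and regularity of $(\delta,\theta)\mapsto(\alpha,L_1)$), concluding for \eqref{poly_sys} by blow-down. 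The computational obstacles you flag — the bookkeeping of powers of $r_2=\sqrt{\epsilon}$ in $c_2$ and the symbolic verification of the transversality determinant — are precisely the substance of the paper's derivation of \eqref{second Lyapunov coefficient_2} and its standing assumption \eqref{Bautin_2}.
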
 

The Bautin bifurcation shows that for fixed $\theta$, $\theta<\theta_B$, the system undergoes a subcritical singular Hopf bifurcation at $\delta=\delta_H$. Subsequently, under the variation of parameter $\theta$ in a neighbourhood of $\theta=\theta_B$, the singular Hopf bifurcation is then accompanied by a saddle-node bifurcation of canard cycles showing the appearance of unstable and stable canard cycles, coalescent and disappearance of the cycles. 

\section{Canard Explosion and Relaxation Oscillation}\label{sec5}

We have shown in the previous section that the model system \eqref{poly_sys} undergoes a singular Hopf bifurcation at $\delta=\delta_H$ assuming the conditions \eqref{canard condition 1}, \eqref{canard condition 2} and $A\neq 0$. Now, the small $\mathcal{O}(\sqrt{\epsilon})$ amplitude family of limit cycles generated through the singular Hopf bifurcation at $\delta=\delta_H$ changes abruptly within an exponentially small parameter interval of the bifurcation parameter $\delta$. This phenomenon in literature is known as ``canard explosion" which connects the small amplitude limit cycles generated through the singular Hopf bifurcation to large relaxation cycles in a continuous fashion. Hence, based on the theorems 3.2 and 3.3 of \cite{krupa2001relaxation} and theorem 8.4.3 of \cite{kuehn2015multiple}, we have the following results for the existence of maximal canard and canard cycles for the system \eqref{poly_sys}.

\begin{theorem}\label{sf_maximal canard}
Let $(\theta,\eta)\in \mathcal{R}$ and the conditions \eqref{canard condition 1} and \eqref{canard condition 2} hold. Then $\exists$ $\epsilon_0>0$ and $\delta_0>0$ such that for $0<\epsilon<\epsilon_0$ and $|\delta-\delta_*|<\delta_0$, the system \eqref{poly_sys} has an equilibrium point $P_2$ in a neighbourhood of the fold point $P$ which converges to $P$ as $(\epsilon, \delta)\to (0,\delta_*)$. The system \eqref{poly_sys} admits a maximal canard at
\begin{align}\label{canard curve}
\delta_C=\delta_*- \left(\frac{b_{10}}{2a_{20}^3v_m(u_m^2+\eta)}A\right)\epsilon +\mathcal{O}(\epsilon^2).
\end{align}
\end{theorem}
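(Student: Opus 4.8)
The plan is to obtain \eqref{canard curve} as a direct application of the canard-explosion theory of Krupa and Szmolyan \cite{krupa2001relaxation}, in the form presented by Kuehn \cite{kuehn2015multiple}. The existence of the equilibrium $P_2$ in a neighbourhood of the fold and its convergence to $P$ as $(\epsilon,\delta)\to(0,\delta_*)$ is \emph{verbatim} the assertion already established in Theorem~\ref{sf_singular_Hopf}: $P_2$ is the solution of the equilibrium relations of \eqref{eq:5} constructed there by the implicit function theorem, so no new argument is needed for that part. The genuinely new content is the location \eqref{canard curve} of the maximal canard.

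First I would record that, under \eqref{canard condition 1}--\eqref{canard condition 2}, the fold $P$ is a non-degenerate canard point of \eqref{sf_normal form}: the fold is quadratic ($a_{20}\neq 0$), the fast fibre is transverse to the critical manifold ($a_{01}\neq 0$), and the slow flow crosses the fold transversally with a transversal dependence on the parameter ($b_{10}\neq 0$ and $\partial g/\partial\delta\neq 0$). A linear rescaling of $(\bar u,\bar v)$ together with a time rescaling puts \eqref{sf_normal form} into the canonical canard normal form of \cite{krupa2001relaxation},
\begin{equation*}
\dot{x}=-y+x^2+O(x^3,xy,\lambda,\epsilon),\qquad \dot{y}=\epsilon\bigl(x-\lambda+O(x^2,xy,y,\epsilon)\bigr),
\end{equation*}
so that the abstract results apply once the low-order jet has been read off from the coefficients listed after \eqref{sf_normal form}.

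Next I would invoke the blow-up $\Phi$ already set up in the paper. In the rescaling chart $K_2$ (where $r_2=\sqrt{\epsilon}$) the vector field becomes \eqref{eq:5}. At $r_2=0$ the blown-up field reduces to a planar system carrying a distinguished \emph{canard} orbit that connects the continuation of the attracting branch $S_0^l$ to the continuation of the repelling branch $S_0^m$ through the fold; this connection is exact at $\lambda_2=0$, which gives $\delta=\delta_*$ as the leading-order canard value. The first-order correction in $r_2$ is governed by the splitting of the attracting and repelling slow manifolds $S_a^\epsilon$, $S_r^\epsilon$ as they are continued through the fold region; demanding that this splitting vanish selects $\delta_C$. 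By Theorems~3.2--3.3 of \cite{krupa2001relaxation} (equivalently Theorem~8.4.3 of \cite{kuehn2015multiple}) this correction is a fixed rational function of the jet coefficients $a_{ij},b_{ij}$, and transforming back through $\lambda=r_2\lambda_2$, $\epsilon=r_2^2$ converts it into a correction to $\delta$ of order $\epsilon$.

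The main obstacle is the last computation: evaluating the Krupa--Szmolyan correction for the present jet and simplifying it to the asserted closed form $-\,b_{10}\,A/\bigl(2a_{20}^3 v_m(u_m^2+\eta)\bigr)$. This step is purely algebraic --- substitute the explicit $a_{ij},b_{ij}$, use $\beta_0=\sqrt{-a_{01}b_{10}}$ and the eigenvector data $q(\delta_H),p(\delta_H)$ recorded above, and cancel common factors --- so no new analytic estimate beyond the transition-map estimates of \cite{krupa2001relaxation,kuehn2015multiple} is required, and the remainder is $O(\epsilon^2)$ once the $O(\epsilon^{3/2})$ contribution is seen to cancel. A decisive consistency check is that the same combination $A$ of \eqref{criticality} already controls the first Lyapunov coefficient \eqref{Eq:L1}; the coincidence of the canard coefficient with the Hopf criticality is exactly the signature of a canard explosion, and it confirms that the simplification has been carried out correctly.
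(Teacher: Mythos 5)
Your proposal follows essentially the same route as the paper: the paper proves this theorem purely by invoking Theorems 3.2--3.3 of Krupa--Szmolyan \cite{krupa2001relaxation} and Theorem 8.4.3 of Kuehn \cite{kuehn2015multiple}, applied to the normal form \eqref{sf_normal form} with the jet coefficients $a_{ij}, b_{ij}$ computed in Section \ref{sec4}, with the equilibrium $P_2$ carried over from Theorem \ref{sf_singular_Hopf} exactly as you do. Your extra detail on the chart-$K_2$ connection problem, the vanishing of the slow-manifold splitting, and the cancellation of the $\mathcal{O}(\epsilon^{3/2})$ term is a faithful sketch of how those cited results work, so there is no substantive difference in approach.
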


\begin{theorem}\label{sf_canard explosion}
Let $(\theta,\eta)\in \mathcal{R}$, the conditions \eqref{canard condition 1} and \eqref{canard condition 2} hold and $\delta_C$ is the maximal canard value. Then there exists a smooth parameterized family of canard cycles growing from $\mathcal{O}(\sqrt{\epsilon})$ amplitude to a relaxation oscillation within an exponentially small parameter interval. However, if $A<0,$ then the singular Hopf bifurcation is supercritical and the family of canard cycles uniformly close to the canard point is stable. Whereas, if $A>0,$ then the family of canard cycles is unstable and there exists a unique parameter value $\delta_{\mathrm{SNL}}$ where the cycles undergo a saddle-node bifurcation of limit cycles. 
\end{theorem}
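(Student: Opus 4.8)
The plan is to treat the statement as an application of the canard-explosion theory of Krupa and Szmolyan, so the real work is verifying that the fold point $P(u_m,v_m)$ is a \emph{generic} canard point and then identifying the quantity that controls criticality. First I would check that under $(\theta,\eta)\in\mathcal{R}$ together with \eqref{canard condition 1}--\eqref{canard condition 2} the hypotheses of Theorems 3.2--3.3 of \cite{krupa2001relaxation} are met: \eqref{canard condition 1} gives the fold ($f=g=\partial_u f=0$ at $P$), the first two relations in \eqref{canard condition 2} give fold non-degeneracy ($a_{01}\neq0$ and $a_{20}\neq0$), and the last two give transversal intersection of the slow nullcline with the fold and transversal dependence on the parameter $\delta$. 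Since $(\theta,\eta)\in\mathcal{R}$ forces $M_{20}$ to be $S$-shaped with $P$ at its local minimum, $P$ sits at the junction of the attracting branch $S_0^l$ and the repelling branch $S_0^m$, which is exactly the canard-point configuration.

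Next I would invoke the blow-up already carried out in Section \ref{sec4}: the rescaling \eqref{scaling} and the desingularized system \eqref{eq:5} put the vector field into the standard canard normal form, so Theorem 3.3 of \cite{krupa2001relaxation} (equivalently Theorem 8.4.3 of \cite{kuehn2015multiple}) directly produces the smooth, $\delta$-parameterized family of canard cycles and the maximal canard located at \eqref{canard curve}. The assertion that the amplitude grows from $\mathcal{O}(\sqrt{\epsilon})$ to a full relaxation oscillation within an exponentially small $\delta$-interval is precisely the canard-explosion conclusion of that theorem; I would cite the exponential estimates rather than reconstruct them.

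The stability dichotomy is then read off from the sign of the first Lyapunov coefficient \eqref{Eq:L1}. Because $a_{01}=-u_m^2<0$, feasibility of $E_*$ gives $\delta_*<1$ so that $b_{10}=2(1-\delta_*)u_mv_m>0$, and $\beta_0=\sqrt{-a_{01}b_{10}}>0$, the prefactor $-a_{01}/(4\beta_0 b_{10})$ is strictly positive; hence $\operatorname{sign}L_1=\operatorname{sign}A$ with $A$ as in \eqref{criticality}. Thus $A<0$ makes the singular Hopf bifurcation supercritical and the small cycles — and, through the explosion, the whole near-canard family — attracting, while $A>0$ makes it subcritical and the near-canard cycles repelling, matching the criticality already recorded in Theorem \ref{sf_singular_Hopf}.

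Finally, for the subcritical branch I would establish the unique saddle-node of limit cycles. When $A>0$ the cycle emanating at $\delta=\delta_H$ is unstable and of amplitude $\mathcal{O}(\sqrt{\epsilon})$, whereas the terminal large relaxation oscillation is stable because it is built from the two normally hyperbolic attracting branches $S_0^l$ and $S_0^r$. Since the family of cycles is smooth and joins an unstable small cycle to a stable large one, its nontrivial Floquet exponent must change sign along the family, forcing a fold of limit cycles at some $\delta=\delta_{\mathrm{SNL}}$; uniqueness follows from the monotonicity of the canard-cycle amplitude in the rescaled parameter of the blow-up chart that is established within the proof of the explosion theorem. I expect this last point — pinning down uniqueness of $\delta_{\mathrm{SNL}}$ and the exponential smallness of the window — to be the main technical obstacle, but both are subsumed in the cited Krupa--Szmolyan results once $P$ has been verified to be a generic canard point.
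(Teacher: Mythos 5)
Your proposal is correct and takes essentially the same route as the paper: the paper states this theorem as a direct consequence of Theorems 3.2--3.3 of \cite{krupa2001relaxation} and Theorem 8.4.3 of \cite{kuehn2015multiple}, applicable once the canard-point non-degeneracy conditions \eqref{canard condition 1}--\eqref{canard condition 2} and the blow-up computation of $A$ in \eqref{criticality} are in place. Your additional explicit checks (positivity of the prefactor $-a_{01}/(4\beta_0 b_{10})$ so that $\operatorname{sign}L_1=\operatorname{sign}A$, and the fold-of-cycles argument in the subcritical case) are consistent with, and slightly more detailed than, what the paper records.
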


\noindent Canard explosion and relaxation oscillation are both considered to be global phenomena that can be observed in systems with multiple timescales, as they characterize transitions from a small amplitude to a large amplitude canard cycle. We next prove the existence of relaxation oscillation in the system \eqref{poly_sys}.

\begin{theorem}
Assume that $(\eta,\theta) \in \mathcal{R}$ and $(u_m,v_m)$ and $(u_M,v_M)$ be the two fold points of the critical manifold $M_{20}$ where $u_m<u_M.$ Let $u_m<u_*<u_M,$ then, for $\epsilon>0$ sufficiently small, the system has a unique relaxation oscillation $\gamma_{\epsilon}$.
\end{theorem}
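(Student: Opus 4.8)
The plan is to construct an explicit singular periodic orbit $\gamma_0$ for the layer-reduced system at $\epsilon=0$ and then invoke the relaxation-oscillation results (Theorems 3.2 and 3.3 of \cite{krupa2001relaxation} together with Theorem 8.4.3 of \cite{kuehn2015multiple}) to conclude that $\gamma_0$ perturbs to a unique nearby limit cycle $\gamma_\epsilon$ of \eqref{poly_sys} for all sufficiently small $\epsilon>0$. Since $(\eta,\theta)\in\mathcal{R}$, the critical manifold $M_{20}$ is $S$-shaped with two non-degenerate fold points $P(u_m,v_m)$ (local minimum) and $Q(u_M,v_M)$ (local maximum), $u_m<u_M$, where the outer branches $S_0^l$, $S_0^r$ are normally hyperbolic attracting and $S_0^m$ is repelling. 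The candidate $\gamma_0$ will consist of two slow segments lying on $S_0^l$ and $S_0^r$, joined by two horizontal fast fibers that drop off the folds.

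First I would fix the orientation of the slow flow. On $M_{20}$ the slow equation reads $\dot v = g = (u^2-\delta(u^2+\eta))v$, whose sign is governed by the comparison of $u$ with $u_*=\sqrt{\delta\eta/(1-\delta)}$: one has $g<0$ for $u<u_*$ and $g>0$ for $u>u_*$. Under the hypothesis $u_m<u_*<u_M$ the equilibrium $E_*$ lies on the repelling branch $S_0^m$, hence off the outer branches; moreover the entire left branch satisfies $u<u_m<u_*$, so the slow flow there decreases $v$ and drives trajectories down toward $P$, while on the entire right branch $u>u_M>u_*$, so the slow flow increases $v$ and drives trajectories up toward $Q$. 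Because $\phi(u)\to\infty$ as $u\to0^+$ and $\phi(u)\to0$ as $u\to1^-$, the fold levels satisfy $v_m<v_M$, and the horizontal fiber $\{v=v_m\}$ meets $S_0^r$ transversally at a point with $u>u_M$, whereas $\{v=v_M\}$ meets $S_0^l$ transversally at a point with $u<u_m$. Concatenating the slow segment on $S_0^l$ from this landing point down to $P$, the fast jump $P\to S_0^r$ along $\{v=v_m\}$, the slow segment on $S_0^r$ up to $Q$, and the fast jump $Q\to S_0^l$ along $\{v=v_M\}$ produces a closed singular orbit $\gamma_0$ that contains no equilibrium of \eqref{poly_sys}.

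I would then carry out the standard geometric singular perturbation argument. On compact subsets of $S_0^l$ and $S_0^r$ bounded away from the folds, Fenichel's theorem furnishes attracting locally invariant slow manifolds $S_\epsilon^l$, $S_\epsilon^r$ that are $\mathcal{O}(\epsilon)$-close to the critical branches, on which the flow is a small smooth perturbation of the slow flow and inherits its strong normal contraction. The non-hyperbolic passages near $P$ and $Q$ are handled by the blow-up desingularization already set up in Section \ref{sec4}: since both folds are generic (quadratic) and satisfy the non-degeneracy conditions guaranteed on $\mathcal{R}$, the transition maps through neighbourhoods of $P$ and $Q$ are well defined and exponentially contracting in the fast direction, as quantified in \cite{krupa2001relaxation}. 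Composing the four pieces gives a first-return (Poincar\'e) map $\Pi_\epsilon$ on a cross-section transverse to $\gamma_0$; the total contraction accumulated on the attracting branches and through the folds makes $\Pi_\epsilon$ a uniform contraction, so by the contraction mapping principle it has a unique fixed point, corresponding to a unique, hyperbolic, attracting periodic orbit $\gamma_\epsilon$ that converges to $\gamma_0$ in Hausdorff distance as $\epsilon\to0$.

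The main obstacle is the control of the flow through the two fold points, where normal hyperbolicity is lost and ordinary Fenichel theory fails. The crux is to show that the transition across each fold is a well-defined, strongly contracting map with the stated $\epsilon$-dependence, so that the composite return map is a genuine contraction on the chosen section; this is precisely the content of the blow-up estimates of Krupa and Szmolyan, and the remaining work is to check that the non-degeneracy data of our specific quartic-based manifold (namely $a_{01}\neq0$ and $a_{20}\neq0$ at $P$, the analogous quantities at $Q$, together with $u_m<u_*<u_M$) match the hypotheses of Theorems 3.2 and 3.3 of \cite{krupa2001relaxation} and Theorem 8.4.3 of \cite{kuehn2015multiple}, after which their conclusions yield existence and uniqueness of $\gamma_\epsilon$.
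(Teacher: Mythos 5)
Your proposal is correct and follows essentially the same route as the paper: construct the singular cycle $\gamma_0$ from two slow segments on $S_0^l$, $S_0^r$ joined by two fast jumps at the fold points, use Fenichel theory together with the Krupa--Szmolyan fold-passage estimates to obtain an exponentially contracting return map on a section transverse to $S_0^l$, and conclude existence, uniqueness, stability, and convergence $\gamma_\epsilon \to \gamma_0$ via the contraction mapping principle. Your explicit check of the slow-flow orientation (the sign of $g$ relative to $u_*$, which forces the flow down $S_0^l$ toward $P$ and up $S_0^r$ toward $Q$ under $u_m<u_*<u_M$) is a detail the paper leaves implicit, but otherwise the arguments coincide.
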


\begin{proof}
Let $(u_l,v_M)$ be the point of intersection of the horizontal line $v=v_M$ with the critical manifold $S^l_0$ and $(u_r,v_m)$ be the point of intersection of the line $v=v_m$ with the critical manifold $S^r_0.$ We then define the singular trajectory $\gamma_0$ as a union of two alternative slow and fast flows. Let us define the  horizontal segments joining $(u_m,v_m)$ to $(u_r,v_m)$ as $l_1$ and $(u_M,v_M)$ to $(u_l,v_M)$ as $l_2.$ Then the fast flows are along the line segments $l_1$ and $l_2.$
Let us define the slow flow along the sub-manifold $S^l_0$ from $(u_l,v_M)$ to $(u_m,v_m)$ as $c_l$ and along $S^r_0$ from $(u_r,v_m)$ to $(u_M,v_M)$ as $c_r.$
We then define the singular trajectory as $$\gamma_0 = l_1 \cup c_r\cup l_2 \cup c_l.$$
Let the unique coexistence equilibrium $E_*=(u_*,v_*)$ lies on the normally hyperbolic repelling branch $S^m_0.$ From the slow-fast dynamics of the system \eqref{temp_model}, the equilibrium $E_*$ is unstable. The two fold points $(u_m,v_m)$ and $(u_M,v_M)$ are the jump points of the system. We now consider a small horizontal section  $\Delta$ transversal to $S^l_0,$ where
$$\Delta = \{(u,v_0): u \in [a,a+\rho],\,\, 0<a<u_m,\,\, v_m<v_0<v_M,\,\, \rho>\epsilon>0\},$$
and track two trajectories $\xi_{\epsilon}^1,\, \xi_{\epsilon}^2$ starting on $\Delta.$ From Fenichel's theorem \cite{fenichel1979geometric}, for $\epsilon>0,$ the normally hyperbolic sub-manifolds $S^l_0$ and $S^r_0$ perturbs to $S^l_{\epsilon}$ and $S^r_{\epsilon}$ respectively. Therefore, for $\epsilon>0$ the trajectories starting at $\Delta$ get attracted towards $S^l_{\epsilon}$ with an exponential rate. From the vicinity of the fold point $(u_m,v_m),$ it then jumps to the other attracting manifold $S^r_{\epsilon},$ and follows until the vicinity of another jump point $(u_M,v_M).$ The trajectories then jump and get exponentially attracted towards $S^l_{\epsilon}$ and return to $\Delta.$ We then define a return map $\Pi: \Delta \rightarrow \Delta.$ From \cite{krupa2001extending}, it shows that the map $\Pi$ is a contraction map with an exponential contracting rate. Therefore, from the contraction mapping theorem, there exists a unique fixed point. This fixed point is attracting, which gives rise to a unique limit cycle $\gamma_{\epsilon}.$ By \cite{fenichel1979geometric,krupa2001extending} we can conclude that $\gamma_{\epsilon} \rightarrow \gamma_0$ as $\epsilon\rightarrow0.$
\end{proof}

\begin{figure}[ht!]
    \centering
    \includegraphics[scale=0.5]{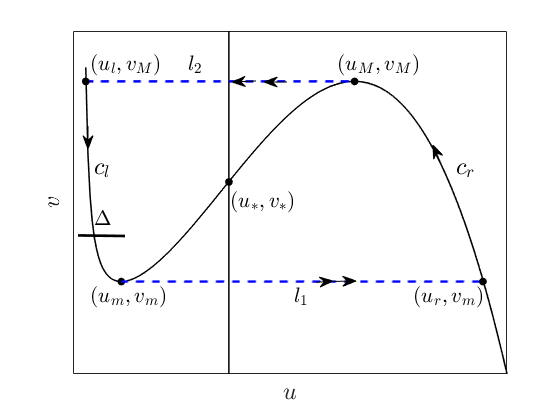}
    \caption{A schematic diagram of the singular trajectory $\gamma_0.$}
    \label{fig:my_label}
\end{figure}

\begin{theorem}\label{unique_RO}
Assume that $(\eta,\theta) \in \mathcal{R}$ and $P(u_m,v_m), Q(u_M,v_M)$ be the two fold points of the critical manifold $M_{20}$. Let $u_*>u_M,$ then, for $\epsilon>0$ sufficiently small, the coexistence equilibrium $E_*(u_*, v_*)$ is globally stable.
\end{theorem}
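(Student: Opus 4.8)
The plan is to exploit the hypothesis $u_*>u_M$, which forces the coexistence equilibrium $E_*=(u_*,v_*)$ onto the right branch $S^r_0$ of the critical manifold, a normally hyperbolic attracting submanifold. First I would establish local asymptotic stability of $E_*$ for $0<\epsilon\ll1$. Parametrising the slow subsystem on $M_{20}$ by $v=\phi(u)$, one has $\dot v=g(u,\phi(u))$ with $g(u,v)=\bigl(u^2-\delta(u^2+\eta)\bigr)v$, so $g>0$ for $u>u_*$ and $g<0$ for $u<u_*$. Since $\phi'(u)<0$ on $S^r_0$, the reduced flow $\dot u=g(u,\phi(u))/\phi'(u)$ drives $u$ towards $u_*$ from both sides, making $E_*$ an asymptotically stable node of the reduced dynamics; together with the normal hyperbolicity of $S^r_0$, Fenichel's theorem \cite{fenichel1979geometric} produces an attracting slow manifold $S^r_\epsilon$ carrying an asymptotically stable equilibrium near $E_*$. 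Equivalently, one checks directly that the Jacobian of \eqref{poly_sys} at $E_*$ has negative trace and positive determinant for $\epsilon$ small.

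Next I would describe the global singular orbit structure. On a fast fibre $v=\mathrm{const}$ the layer equation reads $\dot u=f(u,v)=u^2(\phi(u)-v)$; since $\phi(u)\to\infty$ as $u\to0^+$, the invariant axis $M_{10}=\{u=0\}$ is repelling for $u>0$, so no interior trajectory accumulates on the $v$-axis or on $E_0,E_1$. Where a fibre meets $M_{20}$ in three points, the intersection on $S^m_0$ (where $\phi'>0$) is a source for the fast flow and those on $S^l_0,S^r_0$ (where $\phi'<0$) are sinks, so $S^m_0$ together with its fibres acts as the separatrix: points to its right are attracted to $S^r_0$, points to its left to $S^l_0$. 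On $S^l_0$ one has $u<u_m<u_*$, hence $g<0$ and $\dot u>0$, pushing every left-branch trajectory down to the fold $P(u_m,v_m)$, whence it jumps horizontally and lands on $S^r_0$. Because the reduced flow on $S^r_0$ converges monotonically to $E_*$ and never reaches the upper fold $Q$, the singular trajectory through any interior point terminates at $E_*$. In particular no singular closed orbit exists, since a relaxation loop would require the equilibrium to sit on the repelling middle branch, i.e. $u_m<u_*<u_M$, which is precisely what the hypothesis $u_*>u_M$ excludes.

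Finally I would promote this singular picture to $\epsilon>0$. By Lemma \ref{lemma_2} the flow is dissipative, so there is a bounded positively invariant absorbing region $\mathcal{B}\subset\mathrm{int}\,\IR^2_+$. If \eqref{poly_sys} admitted periodic orbits for a sequence $\epsilon_n\to0$, then by the compactness arguments of geometric singular perturbation theory their limit would be a singular closed orbit of the layer–reduced system, contradicting the previous paragraph; hence for $\epsilon$ small there is no periodic orbit in $\mathcal{B}$. Since $E_*$ is the unique interior equilibrium and is locally asymptotically stable, the Poincar\'e--Bendixson theorem on $\mathcal{B}$ forces every interior trajectory to converge to $E_*$, giving global asymptotic stability. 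The main obstacle is exactly this passage from $\epsilon=0$ to $\epsilon>0$: one must rule out not only relaxation cycles but also the small-amplitude Hopf and canard cycles of Section \ref{sec4}, which requires that the configuration $u_*>u_M$ keep $E_*$ uniformly bounded away from the canard fold $P$, so the singular Hopf mechanism cannot be triggered; controlling the fast jumps and the exponential contraction along $S^r_\epsilon$ uniformly in $\epsilon$, in the spirit of \cite{krupa2001extending,kuehn2015multiple}, is the delicate part of the argument.
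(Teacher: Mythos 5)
Your singular-level geometry is all correct: the reduced flow on $S^r_0$ does converge monotonically to $E_*$ (and the Jacobian check, trace $\to u_*^2\phi'(u_*)<0$, determinant $=-\epsilon f_vg_u>0$, is right), the flow directions on $S^l_0$, $S^m_0$ and at the folds are as you state, and no singular closed orbit exists when $u_*>u_M$. However, the step you yourself flag as ``delicate'' is a genuine gap, not a technicality, and it sits exactly at the crux of the theorem. The assertion that periodic orbits for a sequence $\epsilon_n\to 0$ must accumulate on a singular closed orbit \emph{of relaxation type} (attracting slow segments plus fast jumps) is not a consequence of Fenichel theory or of the fold results in \cite{krupa2001extending,krupa2001relaxation}: a priori the Hausdorff limit can be a slow--fast cycle containing segments of the repelling branch $S^m_0$ (a canard-type limit periodic set), or it can degenerate to the point $E_*$ (small-amplitude cycles), and neither possibility is excluded by your second paragraph. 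Excluding them requires either the theory of limit periodic sets in the sense of \cite{dumortier1996canard} combined with the observation that both folds are generic jump points here --- the slow flow on $S^m_0$ points \emph{toward} $P$, so there is no canard continuation through $P$, and the slow flow on $S^r_0$ never reaches $Q$, so there is no entry onto $S^m_0$ through $Q$ --- together with a separate Fenichel-type argument that no cycles can shrink onto the hyperbolically stable point $E_*$ as $\epsilon\to0$; or else a uniform-in-$\epsilon$ contraction estimate. You invoke the conclusion but supply neither argument, so the proof is incomplete precisely where it needed to be completed.

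It is worth contrasting this with how the paper sidesteps the issue. The paper splits the interior along the vertical line $u=u_M$ into $D_1=\{0<u\le u_M,\,v>0\}$ and $D_2=\{u>u_M,\,v>0\}$ and applies the Dulac function $D(u,v)=1/(u^2v)$ on $D_2$: a one-line computation gives $\partial_u(fD)+\partial_v(\epsilon g D)=\phi'(u)/v<0$ on $D_2$, since $\phi'<0$ to the right of $Q$. This rules out periodic orbits contained in $D_2$ for \emph{every} $\epsilon>0$, with no limiting or compactness argument at all. Any cycle meeting $D_1$ is then excluded by the same Fenichel/jump-point tracking you describe (attraction to $S^l_\epsilon$, jump at $P$, capture by $S^r_\epsilon$ and monotone convergence toward $E_*$), since the forward orbit of any of its points would converge to $E_*$, contradicting periodicity; the same tracking gives global convergence. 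If you wish to keep your Poincar\'e--Bendixson framing, the cleanest repair is to replace your compactness step by this Dulac estimate: it converts the ``delicate part'' of your argument into an elementary divergence computation, and it is exactly the ingredient your proposal is missing.
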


\begin{proof} Let the predator nullcline of the system \eqref{poly_sys} $u=\sqrt{\frac{\delta \eta}{1-\delta}}$ coincides with the sub-manifold $S^r_0$ of the critical manifold $M_{20}$ at the coexistence equilibrium $E_*.$ From the geometry of the $S^r_0,$ we can infer that $E_*$ is locally stable. Let us take a vertical line $u=u_M$ which divides the interior of the first quadrant $\IR^2_+$ into two regions $D_1$ and $D_2$, where 
\begin{align*}
    D_1=\{(u,v):0<u\leq u_M, v>0\}\,\,\text{and}\,\,
    D_2=\{(u,v):u>u_M, v>0\}.
\end{align*}
In the region $D_2$, we define the Dulac function $D:D_2\to\IR$ by $D(u,v)=\frac{1}{u^2v}$. It then follows from the nature of the critical manifold $M_{20}$ that 
\begin{align*}
 \frac{\partial (fD)}{\partial u} +\frac{\partial(\epsilon gD)}{\partial v}=
 \phi'(u)<0,\,\, \forall\,\,u,v\in D_2.  
\end{align*}
Hence, by Dulac criterion, there exists no periodic orbit in $D_2$. Thus, any trajectory starting in the region $D_2$ converges to $E_*.$ For global stability we now track the trajectories starting in $D_1.$ We consider two trajectories: $\xi_1$ starting above the critical manifold $M_{20}$ and $\xi_2$ starting below $M_{20}.$ Since $S^l_0$ is a normally hyperbolic attracting sub-manifold, $\xi_1$ gets attracted to the vicinity of $S^l_0$ till it encounters the fold point $P.$ Note that both the fold points $P, Q$ are jump points and the fast flow moves away from the critical manifold $M_{20}$. It then gets attracted to the vicinity of $S^r_0$ which lies in the region $D_2.$ Thus it converges to $E_*.$ However, the trajectory $\xi_2$ starting below $M_{20}$ and in between two fold points $P$ and $Q$ gets attracted in the region $D_1.$ Thus, it converges to $E_*.$ Therefore any trajectory starting in $D_1$ or $D_2$ will approach to the equilibrium $E_*$ as $t\to\infty$.
\end{proof}

\begin{theorem}
Assume that $(\eta,\theta) \in \mathcal{R}$ and the quantity $A$ given in \eqref{criticality} is positive. Let $P(u_m,v_m)$, $Q(u_M,v_M)$ be the two fold points of the critical manifold $M_{20},$ and $\delta_{\mathrm{SNL}}<\delta<\delta_H$ then for $0 < \epsilon \ll 1$ sufficiently small, the system has two relaxation oscillations, where the outer cycle is a stable relaxation oscillation and the inner cycle is an unstable relaxation oscillation.
\end{theorem}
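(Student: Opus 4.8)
The plan is to combine the local Bautin/Hopf bifurcation analysis already established near the fold point $P$ with the global relaxation-oscillation machinery used in the proof of Theorem~\ref{unique_RO}. The hypothesis $A>0$ places us, by Theorem~\ref{sf_singular_Hopf}, in the subcritical regime of the singular Hopf bifurcation, and Theorem~\ref{sf_canard explosion} then guarantees that the family of canard cycles emanating from the canard point is \emph{unstable} and that there is a saddle-node bifurcation of limit cycles at a parameter value $\delta_{\mathrm{SNL}}$. The key structural fact is that for $\delta_{\mathrm{SNL}}<\delta<\delta_H$ two limit cycles coexist: one born at the saddle-node (the outer, stable one) and one shrinking toward the equilibrium as $\delta\to\delta_H$ (the inner, unstable one). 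My first step would therefore be to invoke Theorem~\ref{sf_canard explosion} to assert the existence of exactly these two cycles in the stated $\delta$-window, distinguishing them by stability via the sign of $A$ and the criticality of $L_1$.

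Second, I would upgrade the description of these two cycles from canard cycles to genuine relaxation oscillations. The mechanism is the canard explosion: within an exponentially thin interval of $\delta$ the small $\mathcal{O}(\sqrt{\epsilon})$ cycles grow continuously into large cycles that follow the full singular orbit $\gamma_0 = l_1\cup c_r\cup l_2\cup c_l$ constructed in the earlier relaxation-oscillation theorem. For $\delta$ in the window $(\delta_{\mathrm{SNL}},\delta_H)$ but bounded away from the exponentially small explosion interval, both surviving cycles have already completed the explosion and therefore track the attracting branches $S^l_\epsilon$ and $S^r_\epsilon$ with fast jumps at the fold points $P$ and $Q$; hence each is a relaxation oscillation in the sense of Fenichel theory. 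The outer cycle encircles the inner cycle because the saddle-node cycle has the larger amplitude.

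Third, I would justify the stability assignment intrinsically rather than merely by continuation. Using Fenichel's theorem, a relaxation oscillation that spends its slow segments on the normally hyperbolic \emph{attracting} branches $S^l_\epsilon,\,S^r_\epsilon$ is exponentially attracting from outside; this identifies the outer cycle as stable. The inner cycle inherits its instability from the subcritical Hopf branch: it is the continuation of the unstable small-amplitude canard cycles, and since it is nested between the unstable equilibrium $E_*$ (on $S^m_0$, unstable for $u_m<u_*<u_M$) and the stable outer cycle, a standard index/annulus argument forces it to be repelling. Concretely, I would set up the return map $\Pi$ on a transversal section $\Delta$ to $S^l_\epsilon$ as in Theorem~\ref{unique_RO}; the outer cycle is the attracting fixed point of $\Pi$, while the inner cycle is the repelling fixed point separating the basin of the outer cycle from the basin of $E_*$.

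The main obstacle will be the third step: establishing the coexistence and correct nesting of \emph{two} relaxation oscillations rigorously, since the contraction argument of Theorem~\ref{unique_RO} produces only a single attracting cycle. I would need to show that the return map $\Pi$ on $\Delta$ is no longer a global contraction in the present parameter regime but instead has two fixed points of opposite stability, whose existence is controlled by the saddle-node structure of Theorem~\ref{sf_canard explosion}. This requires careful tracking of how the unstable canard family and the stable relaxation branch interact across the explosion interval, and matching the exponentially-small estimates from the blow-up charts $K_2$ with the $\mathcal{O}(1)$ geometry of the singular orbit $\gamma_0$; this matching of the local (canard point) and global (relaxation) pictures, together with ruling out any third cycle, is where the real work lies.
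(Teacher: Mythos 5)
For the record, the paper's entire proof of this theorem is your first step and nothing more: it declares the result ``a direct consequence of Theorem~\ref{sf_canard explosion}'' (subcritical singular Hopf for $A>0$, unstable canard family, saddle-node of cycles at $\delta_{\mathrm{SNL}}$) and says the stability of the two cycles is determined, numerically, by the sign of $A$. So your step 1 reproduces the paper's argument exactly. The genuine problems are in your steps 2 and 3, where you try to upgrade \emph{both} cycles to relaxation oscillations.

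The claim that, for $\delta$ bounded away from the explosion interval, ``both surviving cycles have already completed the explosion'' and each tracks only the attracting sheets $S^l_\epsilon$, $S^r_\epsilon$ with jumps at both folds, is false and internally inconsistent. First, the explosion interval of the unstable family lies exponentially close to the maximal canard value $\delta_C$, i.e.\ essentially at the left endpoint $\delta_{\mathrm{SNL}}$ of your window; hence in the bulk of $(\delta_{\mathrm{SNL}},\delta_H)$ the inner cycle is on the \emph{pre-explosion} side --- a small unstable canard cycle of amplitude $o(1)$ around the equilibrium (the paper's own numerical section says precisely this: ``the inner unstable canard cycle is the small canard cycle''), and near $\delta_H$ it never even reaches a section $\Delta$ transverse to $S^l_\epsilon$, so it cannot be a fixed point of your return map $\Pi$. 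Second, and more fundamentally, in this planar geometry any cycle whose slow segments lie only on attracting sheets has Floquet multiplier of order $e^{-c/\epsilon}$ and is therefore strongly stable; an unstable cycle must acquire expansion by spending $\mathcal{O}(1)$ slow time on the repelling sheet $S^m_\epsilon$ (a canard with head) or by lingering near the canard point. Thus the inner cycle is \emph{never} a relaxation oscillation of the type you describe, and two nested cycles of opposite stability cannot both be; the accurate statement, which is all that Theorem~\ref{sf_canard explosion} delivers and all the paper actually uses, is ``unstable canard cycle inside a stable relaxation oscillation.'' Finally, your annulus/index argument rests on $E_*$ being unstable in this window, but for $\delta_{\mathrm{SNL}}<\delta<\delta_H=\delta_*$ one has $u_*<u_m$, so $E_*$ sits on $S^l_0$ and is \emph{stable} --- that is what subcritical means here: the unstable inner cycle surrounds a stable equilibrium and separates its basin from that of the stable outer cycle. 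With an unstable $E_*$, as you assume, no intermediate cycle would be forced at all, so the index argument as written does not close the gap you correctly identified in your last paragraph.
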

\begin{proof}
The proof of this theorem is a direct consequence of Theorem \eqref{sf_canard explosion}. The stability of the canard cycles can be computed numerically by using the constant $A$ from \eqref{criticality}.
\end{proof}

\section{Numerical Illustration}\label{sec6}
\textcolor{black}{
For $\epsilon$ sufficiently small i.e., $0 < \epsilon \ll 1$, the asymptotic expansion of the first Lyapunov coefficient $L_1$ in the blow-up coordinates \eqref{scaling} is computed in eqn.~\eqref{Eq:L1}. It has been shown that the sign of the leading order term $A$ in the expansion of $L_1$ determines the criticality of the singular Hopf bifurcation for $\epsilon\to 0$. The family of canard cycles which emerge due to singular Hopf bifurcation grow from $\IO(\epsilon)$ amplitude to large amplitude relaxation oscillation within an exponential small change of the bifurcation parameter. Now, the criticality of the singular Hopf bifurcation will be changed for $\epsilon>0$ sufficiently small, if $A$ changes its sign and hence, the singular Hopf bifurcation will be degenerate if $A=0$.}

\textcolor{black}{
In such a case, the system undergoes a codimension-2 Bautin bifurcation and the Bautin bifurcation threshold is given by $(\delta, \theta)=(\delta_H, \theta_B)$. Now, keeping all other parameters fixed, in a small neighborhood of the Bautin threshold point $(\delta, \theta)=(\delta_H, \theta_B)$ in the $\delta-\theta$ plane, the existence of a single  canard cycle or two canard cycles (unstable and stable) and the coalescent and disappearance of the cycles through a saddle-node bifurcation of cycles can be observed. We call this singular Bautin bifurcation in the sense that the associated Hopf bifurcation is singular; and for $\epsilon>0$ sufficiently small, the inner unstable canard cycle is the small canard cycle but, the outer stable canard cycle grows to a big relaxation oscillation.}

To numerically validate the results obtained We fix the parameters $\eta,\,\theta$ throughout our simulations such that \textbf{P4:} $\theta<\min\{1,\eta\}.$ We choose the hypothetical values from the above domain as $\theta=0.05$ and $\eta=0.176.$
 For this set of parameter values, we obtain two real roots of the quartic equation \eqref{dphi} which are the $u$ components of the fold points of the critical manifold. Thus, the fold points of $M_{20}$ are $$P(u_m,v_m) = (0.2375,0.2145),\,\text{and}\,\,Q(u_M,v_M) = (0.5359,0.235).$$ 
The parameter $\delta$ is chosen as the bifurcation parameter throughout the text to track the different singular bifurcations. Therefore, depending on the value of $\delta$ the coexistence equilibrium $E_*$ either lies on $S^l_0,$ $S^m_0$ or on $S^r_0$ part of critical manifold, $M_{20}.$ If $E_*$ coincides with either of the fold points, for some value of $\delta,$ then those thresholds determine a critical singular threshold. That is, for $\delta_*=0.2426879409,$ the predator nullcline intersects the critical manifold $M_{20}$ at $P(u_m,v_m) = (0.2375,0.2145).$ The Jacobian matrix evaluated at $Q$ has purely imaginary complex conjugate eigenvalues. Thus, from the condition of Hopf bifurcation, the system \eqref{poly_sys} undergoes a singular Hopf bifurcation, and the quantity $A$ given by the equation \eqref{criticality} is $$A \approx 2.796\times 10^{-7}.$$ This implies that the Hopf bifurcation is degenerate and subcritical. Therefore, the value of $\delta$ gives the threshold for singular Bautin bifurcation. As a result, the canard cycle originating from singular Hopf bifurcation is unstable, and the maximal canard curve is given by the equation \ref{sf_maximal canard}
\begin{equation}
 \begin{aligned}
       \delta_C & \approx \delta_*- \mathcal{O}(10^{-9}).
 \end{aligned}
\end{equation}
Since the transition from small amplitude unstable canard cycle to unstable maximal canard occurs in an extremely small parameter interval, it is difficult to catch this phenomenon through numerical simulation. However, we check for the second Lyapunov coefficient $L_2$ to confirm that this is a point of singular Bautin bifurcation. From the expression \eqref{second_Lyapunov_coefficient} and \eqref{second Lyapunov coefficient_2} we obtain the leading order coefficient of $L_2$ as $$B = -0.004.$$ 

\begin{figure}
    \centering
   \subfloat[]{\includegraphics[width=5.5cm,height=7cm]{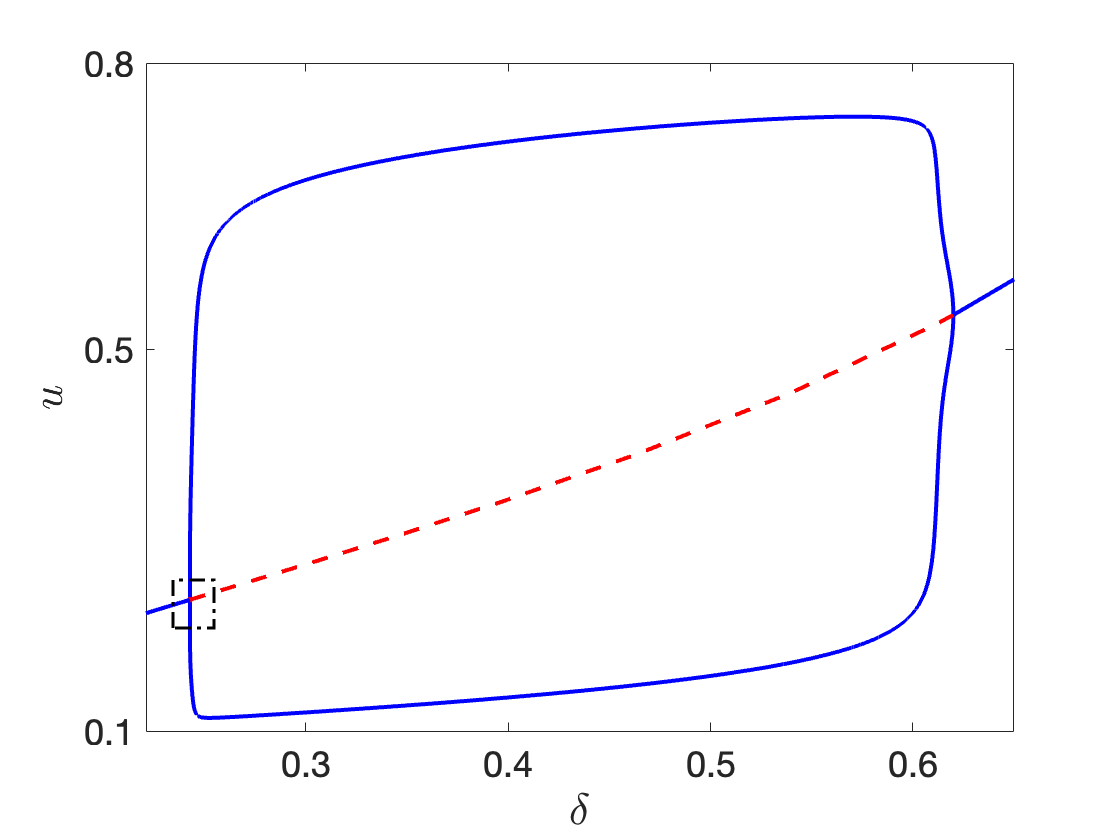}}
   \subfloat[]{\includegraphics[width=5.5cm,height=7cm]{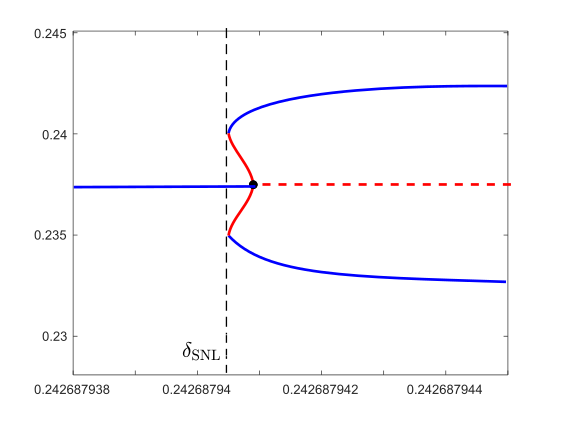}}
    \caption{(a) The bifurcation diagram of the system for $\eta=0.176,\,\theta=0.05,\,\epsilon=0.005.$ The zoomed-in diagram of the dotted rectangular box is shown in (b). The red broken and continuous line corresponds to the unstable equilibrium and unstable canard cycle, respectively. The blue continuous line represents the maxima and minima of the stable component (equilibrium and limit cycle). The black dot is the Singular Hopf point and the broken line is the threshold for saddle-node bifurcation of limit cycles $(\delta_{\mathrm{SNL}}).$}
    \label{fig:sub_critical_canardExplosion}
\end{figure}

The parameter $\delta$ is again varied such that the predator nullcline coincides with the prey nullcline at the point $Q(u_M,v_M)$ when $\delta=0.62.$ At this point the system undergoes a singular Hopf bifurcation which is supercritical, since $$A=-0.1055<0.$$
Thus, a stable canard cycle originates from this point and exists for $0.24268<\delta<0.62.$ Since the direction of the Hopf bifurcation is reversed in the maximum and minimum point of the critical manifold so is the direction of the canard explosion. Therefore, in this case, we observe the transition from small amplitude canard cycle to relaxation oscillation whenever $\delta<\delta_*$ and it continues in this direction. From theorem \eqref{sf_canard explosion}, we conclude that the family of canard cycles that emerges from this point are stable. Therefore, theorem \eqref{unique_RO} states  that there exists a unique relaxation oscillation. In Fig.~\eqref{fig:sub_critical_canardExplosion}, we plot the maximum and minimum components of the variable $u$ with respect to the parameter $\delta.$ The bifurcation diagram shows two Hopf bifurcation thresholds (cf.~Fig.\eqref{fig:sub_critical_canardExplosion}a). The zoomed-in diagram of the extreme left dotted box is shown in Fig.~\eqref{fig:sub_critical_canardExplosion}(b). It shows that in an extremely small neighborhood of the Bautin bifurcation, $\delta_{\mathrm{SNL}}<\delta<\delta_*,$ the small unstable canard cycles (red) and the stable canard cycles (blue) exists and they coalesce at the saddle-node bifurcation of the limit cycle. The threshold for $\delta$ is denoted as $\delta_{\mathrm{SNL}}$ and marked by a black broken line.

\section{Discussion}\label{sec7}
In this paper, we have considered a singularly perturbed planar predator-prey system with Holling type III functional response, where the prey population growth is affected by the weak Allee effect, and the prey reproduces at a significantly higher rate than the predator. The chosen model in the slow-fast framework admits explicit components for coexistence equilibrium, which destabilizes through supercritical and subcritical Hopf bifurcation for specific parameters. The Bautin bifurcation point separates the subcritical, and supercritical branches of the Hopf bifurcation curve. The stable and unstable limit cycles bifurcated through supercritical and subcritical Hopf bifurcation disappear through a saddle-node bifurcation of limit cycles.

\noindent We provide a thorough mathematical analysis of the system by using tools namely geometric singular perturbation theory, normal form theory of slow-fast systems, and blow-up technique to study a wide range of rich and complex nonlinear dynamics, such as singular Hopf bifurcations, singular Bautin bifurcations, canard cycles, canard explosion in the neighbourhood of subcritical and supercritical Hopf bifurcation. We have identified the occurrence of at most two relaxation oscillations via canard cycles known as the ``boom and bust cycle" \cite{rinaldi1992slow,sadhu2022analysis}. Important ecological ramifications stemming from the existence of relaxation oscillation and the onset of canard explosion are also an important part of the discussion. Using the entry-exit function, we have investigated that relaxation oscillations indeed occur in this slow-fast system, which suggests the possibility of the coexistence of both populations with a predictable pattern of rapid population growth and contractions over a significantly short time interval. On the other hand, the canard explosion is a surprising and fascinating event that happens in a tiny exponential region of the parameter $\delta$. From a biological perspective, this canard explosion can be seen as an indicator of an impending regime shift brought on by an exponentially modest change in parameter $\delta$.

\noindent The main contribution of this work is two–fold: 
 \begin{enumerate}
     \item We have derived the analytic form of the second Lyapunov coefficient and presented a thorough study of the singular Bautin bifurcation for a system under a slow-fast framework for the first time in the literature (see \cite{krupa2001extending,krupa2001relaxation,kuehn2015multiple}). The normal form for singular Bautin bifurcation has been derived by explicitly finding the locally invertible parameter-dependent transformations. To the best of our knowledge, no literature provides an explicit derivation of locally invertible parameter-dependent transformations. 
     \item We have derived the expansions of the first and second Lyapunov coefficients. It has been observed that the leading order coefficient of $L_1$, i.e., the coefficient of $\sqrt{\epsilon}$ involved with the expression of $L_1$, determines the criticality of the singular Hopf bifurcation, whereas the leading order coefficient of $L_2$, i.e., the coefficient of $\epsilon^{\frac{3}{2}}$ in $L_2$, determines the criticality of the singular Bautin bifurcation.
     \end{enumerate}
It is important to mention here that this approach would be applicable to investigate Bautin bifurcation for a wide range of slow-fast systems, including but not limited to biological, physical, ecological, and medical fields. Further, how this approach can be extended for higher dimensional slow-fast systems is a non-trivial open problem.

\bibliographystyle{siamplain}
	\bibliography{sl_fast}

\begin{thebibliography}{10}

\bibitem{arnold1994dynamical}
{\sc V.~I. Arnold}, {\em {Dynamical systems V: bifurcation theory and
  catastrophe theory}}, vol.~5 (Encyclopaedia of Mathematical Sciences),
  Springer-Verlag, Berlin, 1994.

\bibitem{atabaigi2021canard}
{\sc A.~Atabaigi}, {\em Canard explosion, homoclinic and heteroclinic orbits in
  singularly perturbed generalist predator--prey systems}, International
  Journal of Biomathematics, 14 (2021), p.~2150003.

\bibitem{bai2021dynamics}
{\sc D.~Bai, Y.~Kang, S.~Ruan, and L.~Wang}, {\em {Dynamics of an intraguild
  predation food web model with strong Allee effect in the basal prey}},
  Nonlinear Analysis: Real World Applications, 58 (2021), p.~103206.

\bibitem{courchamp2008allee}
{\sc F.~Courchamp, L.~Berec, and J.~Gascoigne}, {\em Allee effects in ecology
  and conservation}, OUP Oxford, 2008.

\bibitem{dumortier1996canard}
{\sc F.~Dumortier and R.~Roussarie}, {\em Canard cycles and center manifolds},
  vol.~577, American Mathematical Soc., 1996.

\bibitem{fenichel1979geometric}
{\sc N.~Fenichel}, {\em Geometric singular perturbation theory for ordinary
  differential equations}, Journal of Differential Equations, 31 (1979),
  pp.~53--98.

\bibitem{fitzhugh1955mathematical}
{\sc R.~FitzHugh}, {\em Mathematical models of threshold phenomena in the nerve
  membrane}, The Bulletin of Mathematical Biophysics, 17 (1955), pp.~257--278.

\bibitem{BirkhoffRota}
{\sc B.~Garrett and R.~Gian-Carlo}, {\em Ordinary Differential Equations},
  Wiley Boston, 1982.

\bibitem{getz1996hypothesis}
{\sc W.~M. Getz}, {\em A hypothesis regarding the abruptness of density
  dependence and the growth rate of populations}, Ecology, 77 (1996),
  pp.~2014--2026.

\bibitem{han2012slow}
{\sc X.~Han and Q.~Bi}, {\em Slow passage through canard explosion and
  mixed-mode oscillations in the forced van der pol’s equation}, Nonlinear
  Dynamics, 68 (2012), pp.~275--283.

\bibitem{hek2010geometric}
{\sc G.~Hek}, {\em Geometric singular perturbation theory in biological
  practice}, Journal of Mathematical Biology, 60 (2010), pp.~347--386.

\bibitem{hsu2001global}
{\sc S.-B. Hsu, T.-W. Hwang, and Y.~Kuang}, {\em {Global analysis of the
  Michaelis--Menten-type ratio-dependent predator-prey system}}, Journal of
  Mathematical Biology, 42 (2001), pp.~489--506.

\bibitem{hsu2008global}
{\sc S.-B. Hsu, T.-W. Hwang, and Y.~Kuang}, {\em {Global dynamics of a
  predator-prey model with Hassell-Varley type functional response}}, Discrete
  Contin. Dyn. Syst. Ser. B, 10 (2008), pp.~857--871.

\bibitem{hsu2021relaxation}
{\sc T.-H. Hsu and S.~Ruan}, {\em Relaxation oscillations and the entry-exit
  function in multidimensional slow-fast systems}, SIAM Journal on Mathematical
  Analysis, 53 (2021), pp.~3717--3758.

\bibitem{huang2014bifurcations}
{\sc J.~Huang, S.~Ruan, and J.~Song}, {\em {Bifurcations in a predator--prey
  system of Leslie type with generalized Holling type III functional
  response}}, Journal of Differential Equations, 257 (2014), pp.~1721--1752.

\bibitem{kooi2018modelling}
{\sc B.~Kooi and J.~Poggiale}, {\em Modelling, singular perturbation and
  bifurcation analyses of bitrophic food chains}, Mathematical Biosciences, 301
  (2018), pp.~93--110.

\bibitem{kristiansen2019geometric}
{\sc K.~U. Kristiansen}, {\em Geometric singular perturbation analysis of a
  dynamical target mediated drug disposition model}, Journal of Mathematical
  Biology, 79 (2019), pp.~187--222.

\bibitem{krupa2008mixed}
{\sc M.~Krupa, N.~Popovi{\'c}, and N.~Kopell}, {\em Mixed-mode oscillations in
  three time-scale systems: a prototypical example}, SIAM Journal on Applied
  Dynamical Systems, 7 (2008), pp.~361--420.

\bibitem{krupa2001extending}
{\sc M.~Krupa and P.~Szmolyan}, {\em Extending geometric singular perturbation
  theory to nonhyperbolic points---fold and canard points in two dimensions},
  SIAM Journal on Mathematical Analysis, 33 (2001), pp.~286--314.

\bibitem{krupa2001relaxation}
{\sc M.~Krupa and P.~Szmolyan}, {\em Relaxation oscillation and canard
  explosion}, Journal of Differential Equations, 174 (2001), pp.~312--368.

\bibitem{kuehn2015multiple}
{\sc C.~Kuehn}, {\em Multiple time scale dynamics}, vol.~191 of Applied
  Mathematical Sciences, Springer Cham, 2015.

\bibitem{kuznetsov1998elements}
{\sc Y.~A. Kuznetsov}, {\em Elements of applied bifurcation theory}, vol.~112
  of Applied Mathematical Sciences, Springer, 1998.

\bibitem{li2013canard}
{\sc C.~Li and H.~Zhu}, {\em {Canard cycles for predator--prey systems with
  Holling types of functional response}}, Journal of Differential Equations,
  254 (2013), pp.~879--910.

\bibitem{li2018bifurcation}
{\sc J.~Li, T.~Quan, and W.~Zhang}, {\em {Bifurcation and number of subharmonic
  solutions of a 4D non-autonomous slow--fast system and its application}},
  Nonlinear Dynamics, 92 (2018), pp.~721--739.

\bibitem{lu2021global}
{\sc M.~Lu and J.~Huang}, {\em {Global analysis in Bazykin's model with Holling
  II functional response and predator competition}}, Journal of Differential
  Equations, 280 (2021), pp.~99--138.

\bibitem{pal2012delayed}
{\sc P.~J. Pal, T.~Saha, M.~Sen, and M.~Banerjee}, {\em {A delayed
  predator--prey model with strong Allee effect in prey population growth}},
  Nonlinear Dynamics, 68 (2012), pp.~23--42.

\bibitem{rinaldi1992slow}
{\sc S.~Rinaldi and S.~Muratori}, {\em Slow-fast limit cycles in predator-prey
  models}, Ecological Modelling, 61 (1992), pp.~287--308.

\bibitem{sadhu2022analysis}
{\sc S.~Sadhu}, {\em Analysis of the onset of a regime shift and detecting
  early warning signs of major population changes in a two-trophic
  three-species predator-prey model with long-term transients}, Journal of
  Mathematical Biology, 85 (2022), pp.~1--33.

\bibitem{van2007heteroclinic}
{\sc G.~A. Van~Voorn, L.~Hemerik, M.~P. Boer, and B.~W. Kooi}, {\em
  {Heteroclinic orbits indicate overexploitation in predator--prey systems with
  a strong Allee effect}}, Mathematical Biosciences, 209 (2007), pp.~451--469.

\bibitem{wang2011predator}
{\sc J.~Wang, J.~Shi, and J.~Wei}, {\em {Predator--prey system with strong
  Allee effect in prey}}, Journal of Mathematical Biology, 62 (2011),
  pp.~291--331.

\bibitem{qinsheng2020relaxation}
{\sc Y.~Xia, Z.~Zhang, and Q.~Bi}, {\em {Relaxation oscillations and the
  mechanism in a periodically excited vector field with pitchfork–Hopf
  bifurcation}}, Nonlinear Dynamics, (Online) (2020).

\bibitem{yaru2020canard}
{\sc L.~Yaru and L.~Shenquan}, {\em {Canard-induced mixed-mode oscillations and
  bifurcation analysis in a reduced 3D pyramidal cell model}}, Nonlinear
  Dynamics,  (2020), pp.~1--37.

\end{thebibliography}

\end{document}